\DeclarePairedDelimiter{\floor}{\lfloor}{\rfloor}
\DeclareMathOperator{\e}{e} 
\DeclareMathOperator{\Jac}{D} 
\renewcommand{\g}{\tilde{f}}
\newcounter{crmk}
\renewcommand\thecrmk{\arabic{crmk}}
\newcounter{cas}
\renewcommand\thecas{\arabic{cas}}
\newtheorem{assumption}{Assumption}
\newtheorem{remark}{Remark}
\title{Stochastic Modeling and Regularity of the Nonlinear Elliptic curl--curl Equation} 
\author{Ulrich R\"omer\footnotemark[2]\ \footnotemark[4] \and Sebastian Sch\"ops\footnotemark[2]\ \footnotemark[3]\ \footnotemark[5] \and Thomas Weiland\footnotemark[2]}
\begin{document}
\maketitle
\newcommand{\slugmaster}{}
\renewcommand{\thefootnote}{\fnsymbol{footnote}}
\footnotetext[2]{Technische Universitaet Darmstadt, Institut f\"ur Theorie Elektromagnetischer Felder, Schlo\ss gartenstrasse 8, D--64289 Darmstadt}
\footnotetext[3]{Technische Universitaet Darmstadt, Graduate School of Computational Engineering, Dolivostraße 15, D--64293 Darmstadt}
\footnotetext[4]{Supported by the "Deutsche Forschungsgemeinschaft" (DFG) under SFB $634$}
\footnotetext[5]{Supported by the "Excellence Initiative" of the German Federal and State Governments and the Graduate School of Computational Engineering at Technische Universitaet Darmstadt, the FP7-ICT-2013-11 Project ”nanoCOPS: Nanoelectronic COupled Problems Solutions” and the "Bundesministerium f\"ur Bildung und Forschung" (BMBF) SIMUROM project.}
\renewcommand{\thefootnote}{\arabic{footnote}}

\begin{abstract}
This paper addresses the nonlinear elliptic curl--curl equation with uncertainties in the material law. It is frequently employed in the numerical evaluation of magnetostatic fields, where the uncertainty is ascribed to the so--called $B$--$H$ curve. A truncated Karhunen--Lo\`{e}ve approximation of the stochastic $B$--$H$ curve is presented and analyzed with regard to monotonicity constraints. A stochastic nonlinear curl--curl formulation is introduced and numerically approximated by a finite element and collocation method in the deterministic and stochastic variable, respectively. The stochastic regularity is analyzed by a higher order sensitivity analysis. It is shown that, unlike to linear and several nonlinear elliptic problems, the solution is not analytic with respect to the random variables and an algebraic decay of the stochastic error is obtained. Numerical results for both the Karhunen--Lo\`{e}ve expansion and the stochastic curl--curl equation are given for illustration.
\end{abstract}

\begin{keywords}nonlinear, uncertainties, Karhunen--Lo\`{e}ve, regularity, stochastic collocation\end{keywords}

\begin{AMS}78A30, 65N15, 65N35, 65N30, 65N12\end{AMS}

\pagestyle{myheadings}
\thispagestyle{plain}
\markboth{U. R\"omer and S. Sch\"ops and T. Weiland}{Stochastic curl--curl Equation}

\section{Introduction}
Today, it is increasingly acknowledged that uncertainty quantification is an important part within simulation based design. It allows to control the risk of failure as technical devices are designed and operated closer to their physical limits. When the underlying physics are modelled by partial differential equations, uncertain inputs are typically identified with the material's constitutive relation, geometries, initial data or boundary values. In particular the case of random material coefficients for linear equations has received considerable attention in recent years, see \cite{Frauenfelder_2005,xiu2005high,babuska2004galerkin,babuvska2007stochastic} among others. In electromagnetics the coefficients are frequently modeled to be piecewise constant on subdomains. In a stochastic setting, which is adapted here, neglecting anisotropy, on each subdomain the material coefficient can be represented by a single random variable. An important exception is the magnetic properties of ferromagnetic materials, that, in the anhysteretic case, are expressed through a map
\begin{equation}
|\Hr \left(\x \right)| = f \left(\x,|\Br \left(\x \right)| \right),
\label{eq:bh_bh}
\end{equation}
where $\Hr$ and $\Br$ are the magnetic field and flux density, respectively. Magnetic saturation effects, incorporated through the nonlinear dependency on the field magnitude, cannot be neglected in many situations and have been found to be sensitive to uncertainties. In this setting the input randomness is modelled by an infinite--dimensional random field and its discretization must be accomplished. This is complicated by the fact, that each trajectory of the material law has to fulfill smoothness and shape requirements. An example of a material law is given in Figure \ref{fig:bh_f} on the left, showing clearly a monotonic behavior of $f$. An important aspect of this work is the shape--aware modelling of uncertainties in view of a constraint for the derivative
\begin{equation}
0 < \alpha \leq \frac{\partial f \left(\cdot,s \right)}{\partial s} \leq \beta < \infty, \quad s \geq 0.
\label{eq:bh_shapeconstraint}
\end{equation}
To this end, in the literature, closed--form parametric material models have been employed. Among others, we mention different forms of the Brillouin model, see, e.g., \cite{Rosseel_2010, Rama_2012} or the Brauer model \cite{Bartel_2013}. These models are appealing due to their simplicity and a physical interpretation of the parameters can often be given. Finite dimensional random fields are readily obtained by using random- instead of deterministic parameters. However, there is a lack of flexibility due to the a priori fixed dimensionality and specific shape of the analytical functions used. Moreover, the model parameters have been found to be correlated \cite{Rama_2012} and the model might not be used directly in stochastic simulations. The (linear) truncated Karhunen--Lo\`{e}ve expansion \cite{loeve1963probability,ghanem1991stochastic} is known to be a flexible and efficient tool to approximate random fields with high accuracy and to separate stochastic and deterministic variables. It is applied in this paper in view of a stochastic material law with regularity and shape constraints (\ref{eq:bh_shapeconstraint}). The regularity can be controlled by the smoothness of the covariance function whereas the shape constraints imply restrictions on the truncation order, or alternatively, on the uncertainty magnitude. It is observed that this magnitude is dependent on the correlation length of the process, in accordance with \cite[pp. 1281-1283]{babuvska2005solving} in the context of uniform coercivity constraints. Numerical examples with data supported from measurements given in \cite{Rama_2012} support the findings. 

Given random material input data, we discuss a stochastic nonlinear magnetostatic formulation. For related work see \cite{Rosseel_2010,Bartel_2013} and \cite{chauviere2006computational} for the full set of (linear) Maxwell's equations in a stochastic setting. Following a frequently used procedure in the literature \cite{babuvska2005solving,nobile2009analysis}, we first analyze the modelling error arising in magnetic fields through the truncation of the Karhunen--Lo\`{e}ve expansion. Then the problem is reformulated as a high dimensional deterministic one and an approximation scheme is presented. The scheme involves linearization, as well as a finite element and collocation approximation in the deterministic and stochastic variable, respectively. We analyze the stochastic regularity to establish the convergence rate of the numerical procedure. A complication arises here due to the specific type of the nonlinearity. In particular the implicit function theorem cannot be applied and no analytic dependency of the solution with respect to the random variables is obtained. Instead, finite differentiability is established using the chain rule and the deterministic regularity of the solution. Numerical examples will complement the findings. Let us also mention, that the problem considered here is related to many other physical problems, e.g., nonlinear heat conduction. In two dimensions the equations reduce to a nonlinear version of the Poisson equation.

\begin{figure}[!t]
\centering
\includegraphics{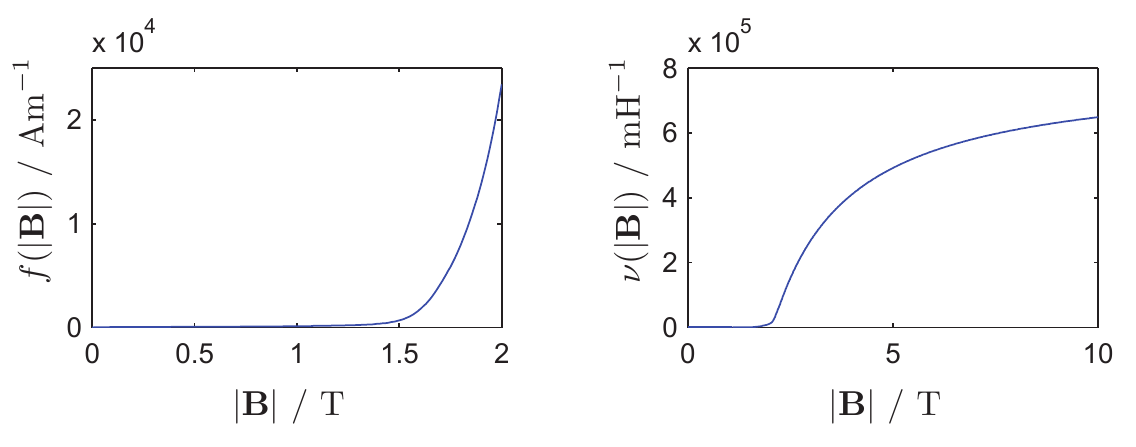}
\caption{Left: example of a nonlinear magnetic material law based on real data. Right: associated magnetic reluctivity, satisfying $\lim_{x \rightarrow \infty} \nu \left(x \right) = \nu_0$.}
\label{fig:bh_f}
\end{figure}

The paper is structured as follows. After a brief description of the magnetostatic model problem in Section \ref{sec:ms}, we introduce randomness in the material law, present the truncated Karhunen--Lo\`{e}ve expansion and analyze the respective truncation error in the stochastic problem in Section \ref{sec:stochastic_ms}. In Section \ref{sec:num_prodecure} we will outline the stochastic collocation method and establish its convergence. This involves a higher order sensitivity analysis w.r.t. the stochastic variables. Finally, in Section \ref{sec:num_examples} convergence results will be illustrated by numerical examples.

\subsection{Notation}
Boldface type is used for vectors $\mf{u}=(u_1,u_2,u_3)$ and vector-functions. Important function spaces are
\begin{equation}
V \defi \H_0 \left(\curl;D \right) = \{\mf{u} \in L^2(D)^3 \ | \ \curl \mf{u} \in L^2(D)^3 \ \mathrm{and} \ \mf{u}\times \mf{n} = 0, \ \mathrm{on} \ \partial D \}, \\
\end{equation}
and for $s>0$ 
\begin{equation}
\H^s \left(\curl,D \right) \defi \{ \ur \in \H^s \left(D \right)^3 \ | \ \curl \ur \in \H^s \left(D \right)^3 \}, 
\end{equation}
see \cite{monk2003finite}. We write $\left(\cdot,\cdot \right)_2$ for the $L^2 \left(D \right)^3$-inner product and $\| \cdot \|_2$ for the associated norm. 

The Euclidean norm is denoted by $|\cdot|$. Also, we introduce the notation $g\D{i} \defi \partial^i_x g \left(x \right)$ for the $i$-th derivative of a function. For $I \subset \R^n$, open and bounded, $\C^k \left(\Ib \right)$ denotes the space of $k$-times differentiable functions with bounded and uniformly continuous derivatives up to order $k$, endowed with the norm 
\begin{equation}
\|g\|_{\C^k \left(\Ib \right)} \defi \max_{0 \leq i \leq k} \sup_{x \in \I} |g\D{i} \left(x \right)|. 
\end{equation}
The sets of real non-negative and positive numbers are denoted $\R_0^+$ and $\R^+$, whereas $\mathbb{N}_0,\mathbb{N}$ refer to the sets of natural numbers with and without zero, respectively. 

We also need a multi--index notation: for $\bs{\gamma} \in \mathbb{N}_0^M$, let $\DJ_{\y}^{\bs{\gamma}} \defi \frac{\partial^{|\bs{\gamma}|_1}}{\partial y_1^{\gamma_1} \dots \partial y_M^{\gamma_M}}$, for $|\bs{\gamma}|_{1} \defi \sum_i \gamma_i >0$ and $\DJ_\y^0$ be the identity operator.
For two multi--indices, $\mi{\gamma}_1 < \mi{\gamma}_2$ holds true if $\mi{\gamma}_1 \leq \mi{\gamma}_2$ holds true component-wise and if $\mi{\gamma}_1 \neq \mi{\gamma}_2$. Also, to a multi--index $\mi{\gamma}$ with $|\mi{\gamma}|_1=n$, we associate a set $\gamma$ with $n$ entries, such that $\partial^n/ \prod_{i \in \gamma} \partial_{y_i}= \DJ_\y^{\mi{\gamma}}$ see \cite[p.518]{motamed2013stochastic}.

\section{The Model Problem}
\label{sec:ms}
We consider the magnetostatic problem on a domain $D$,
\begin{subequations}
\begin{align}
\curl \Hr &= \Jr,& &\mathrm{in} \ D, \\
\div \Br &= 0,& &\mathrm{in} \ D, \\
\Br \cdot \n &= 0,& &\mathrm{on} \ \partial D,
\end{align}
\label{eq:bh_ms_strong}%
\end{subequations}
with outer unit normal $\n$ and divergence free electric current density $\div \Jr = 0$. We introduce the magnetic vector potential $\Ar$, such that $\curl \Ar = \Br$. Then using the material law \eqref{eq:bh_bh}, equations (\ref{eq:bh_ms_strong}) are transformed into a second order $\curl$--$\curl$ problem
\begin{subequations}
\begin{align}
\curl \left (\nu \left(|\curl \Ar| \right) \curl \Ar \right) &= \Jr,& &\mathrm{in} \ D, \label{eq:bh_ms_strong_vector_a}\\
\Ar \times \n &= 0,& &\mathrm{on} \ \partial D, \\
\intertext{supplemented with the Coulomb gauge}
\div \Ar &= 0,& &\mathrm{in} \ D.
\end{align}
\label{eq:bh_ms_strong_vector}%
\end{subequations}
In \eqref{eq:bh_ms_strong_vector_a}, $\nu$ refers to the magnetic reluctivity, defined by 
\begin{equation}
\nu \left(\cdot,s \right) \defi \frac{f \left(\cdot,s \right)}{s}, \quad s>0.
\label{eq:bh_reluctivity}
\end{equation}
Although $\nu$ is the coefficient appearing in the differential equations measurement results are typically given directly for $f$ and thus both will be included in the discussion. Note that, in contrast to $f$, $\nu$ is not necessarily monotonic. From now on we simplify the situation and neglect the spatial dependency in $f$, i.e., $f \left(\x,\cdot \right) = f \left(\cdot \right)$, by abuse of notation. An adaption to the important case of piecewise constant material properties is achieved by minor modifications, see also Remark \ref{rmk:spatial_f}. In absence of hysteresis and anisotropy, the nonlinear magnetic material law can be described, following \cite{Reitzinger_2002,Pechstein_2006}, by a bijective function 
\begin{equation}
f:\R_0^+ \rightarrow \R_0^+: \ |\mf{B}| \mapsto |\mf{H}| = f \left(|\mf{B}| \right).
\label{eq:intro_bh}
\end{equation}
Following \cite{Pechstein_2006}, properties of the deterministic material law $f$ are summarized in the following assumption.
\begin{assumption}\label{as:fdet}
It holds for the deterministic material law that
\begin{subequations}
\begin{align}
&f \mathrm{\ is \ continuously \ differentiable}, \label{as:bh_f1} \\
&0 < \fmin \leq f\D{1} \left(s \right) \leq \fmax < \infty,  \label{as:bh_f2}\\
&f \left(0 \right)=0, \label{as:bh_f3} \\
&\lim_{s \rightarrow \infty} f\D{1} \left(s \right)=\fmax. \label{as:bh_f4}
\end{align}
\end{subequations}
\label{as:bh}%
\end{assumption} 
As $f$ might have increased differentiability properties, we refer to (\ref{as:bh_f1}) as minimal regularity assumption. Note, that $\fmax$ can be identified with the reluctivity of vacuum $\nu_0$. Depending on the problem formulation it might be more convenient to work with the inverse law $f^{-1}$. However, in this case similar assumptions can be made. 

\begin{lemma}
\label{lem:nu}
Let Assumption \ref{as:fdet} be satisfied, then the magnetic reluctivity satisfies for all $s \in \R_0^+$
\begin{subequations}
\begin{align}
& \nu \ \mathrm{is \ continuous \ and } \ \alpha \leq \nu \left(s \right) \leq \beta, \\
&s \mapsto \nu \left(s \right)s \ \mathrm{is \ strongly \ monotone}, \\
&s \mapsto \nu \left(s \right)s \ \mathrm{is \ Lipschitz \ continuous},
\end{align}
\label{eq:bh_prop_reluctivity}%
\end{subequations}
with monotonicity and Lipschitz constants $\fmin,\fmax$, respectively.
\end{lemma}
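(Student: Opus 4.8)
The plan is to reduce all three assertions to a single application of the mean value theorem to $f$, exploiting the derivative bounds \eqref{as:bh_f2} and the normalization \eqref{as:bh_f3}. Throughout write $g(s) \defi \nu(s)s$. By \eqref{eq:bh_reluctivity} we have $g(s) = f(s)$ for $s > 0$, and since $f(0)=0$ by \eqref{as:bh_f3} the identity $g(s)=f(s)$ persists at $s=0$ once $\nu$ is extended there. The only preliminary point needing care is this extension, since $\nu(s) = f(s)/s$ is a priori defined only for $s>0$; I would set $\nu(0) \defi f\D{1}(0)$, the natural value forced by the next step.

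For the first assertion, continuity of $\nu$ on $(0,\infty)$ is immediate because $f$ is continuous (indeed $C^1$ by \eqref{as:bh_f1}) and the denominator does not vanish. To treat $s=0$ and to obtain the bounds simultaneously, I would apply the mean value theorem to $f$ on $[0,s]$: using $f(0)=0$, for every $s>0$ there is $\xi_s \in (0,s)$ with $\nu(s) = f(s)/s = (f(s)-f(0))/s = f\D{1}(\xi_s)$. As $s \to 0^+$ we have $\xi_s \to 0$, so continuity of $f\D{1}$ gives $\nu(s) \to f\D{1}(0) = \nu(0)$, establishing continuity at the origin. The representation $\nu(s)=f\D{1}(\xi_s)$ together with \eqref{as:bh_f2} yields $\fmin \le \nu(s) \le \fmax$ for $s>0$, and this extends to $s=0$ by the definition of $\nu(0)$; this is exactly $\alpha \le \nu(s) \le \beta$ with $\alpha=\fmin$, $\beta=\fmax$.

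The remaining two assertions follow from the same representation applied to $g=f$. For any $s_1,s_2 \ge 0$ there is $\xi$ between them with $g(s_1)-g(s_2) = f\D{1}(\xi)(s_1-s_2)$. Multiplying by $(s_1-s_2)$ and using $f\D{1}(\xi)\ge \fmin$ gives the strong monotonicity estimate $(g(s_1)-g(s_2))(s_1-s_2) \ge \fmin\,|s_1-s_2|^2$, while taking absolute values and using $f\D{1}(\xi)\le \fmax$ gives $|g(s_1)-g(s_2)| \le \fmax\,|s_1-s_2|$, i.e.\ Lipschitz continuity, with constants precisely $\fmin$ and $\fmax$ as claimed.

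At the level of the scalar statement there is no genuine obstacle: everything reduces to the mean value theorem together with the uniform derivative bounds, and I would expect the author's proof to be correspondingly short. The substantive point, and where I anticipate the real work to lie, is the transfer of these properties to the vector field $\mathbf{v} \mapsto \nu(|\mathbf{v}|)\mathbf{v}$ appearing in \eqref{eq:bh_ms_strong_vector_a}, since it is this map whose strong monotonicity and Lipschitz continuity enter the well-posedness analysis. This transfer can be obtained from the scalar bounds by observing that $\nu(|\mathbf{v}|)\mathbf{v}$ is the gradient of the radial potential $\mathbf{v} \mapsto \int_0^{|\mathbf{v}|} f(t)\,dt$, whose Hessian at $\mathbf{v}$ has eigenvalues $f\D{1}(|\mathbf{v}|)$ in the radial direction and $\nu(|\mathbf{v}|)$ in the tangential directions; both lie in $[\fmin,\fmax]$ by the assertions just established, so the Hessian is uniformly pinched between $\fmin$ and $\fmax$ times the identity, yielding strong monotonicity with constant $\fmin$ and Lipschitz continuity with constant $\fmax$ for the vector map as well.
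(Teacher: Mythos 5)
Your proof is correct, but it cannot be compared line-by-line with the paper's, because the paper does not actually prove Lemma \ref{lem:nu}: its entire proof is the citation ``See, e.g., \cite{Pechstein_2004}.'' So what you have done is reconstruct, self-contained, the standard argument that the cited reference contains. Your route is the natural one, and its key simplification is the observation that, once one sets $\nu(0) \defi f\D{1}(0)$, the map $s \mapsto \nu(s)s$ coincides with $f$ on all of $\R_0^+$; the second and third assertions are then statements about $f$ itself, and the mean value theorem together with $\fmin \leq f\D{1} \leq \fmax$ gives the monotonicity and Lipschitz constants exactly as you state. Your treatment of the origin (continuity of $\nu$ at $0$ via the representation $\nu(s) = f\D{1}(\xi_s)$ and continuity of $f\D{1}$) is the only delicate point in the scalar statement, and you handle it correctly; this extension is genuinely needed, since \eqref{eq:bh_reluctivity} defines $\nu$ only for $s>0$ while the lemma asserts the properties on all of $\R_0^+$. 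Your closing paragraph on the vector map $\mf{v} \mapsto \nu(|\mf{v}|)\mf{v}$ goes beyond what the lemma asserts, but your instinct about where the substance lies is right: the paper again obtains the corresponding vector and operator properties by citation (the estimates \eqref{eq:apriori_monoLip} from \cite{langer2006coupled}, and Lemma \ref{lem:nud}), and your Hessian computation --- eigenvalues $f\D{1}(|\mf{v}|)$ in the radial direction and $\nu(|\mf{v}|)$ tangentially, both pinched in $[\fmin,\fmax]$, with the Jacobian extending continuously to the value $\nu(0)$ times the identity at $\mf{v}=0$ since $f\D{1}(|\mf{v}|)-\nu(|\mf{v}|) \to 0$ --- is the standard proof of those facts. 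It even yields the pointwise Lipschitz constant $\fmax$ for the vector map, sharper than the factor $3\fmax$ quoted in \eqref{eq:apriori_monoLip} for the operator $\A$.
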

\begin{proof}
See, e.g., \cite{Pechstein_2004}.
\end{proof}

\subsection{The Nonlinear curl--curl Formulation} \label{sec:problem}
We proceed with the derivation of a weak formulation of the model problem and a result on existence and uniqueness of a solution. Throughout the paper, we consider a bounded, simply connected polyhedral Lipschitz domain $D$. A weak formulation of (\ref{eq:bh_ms_strong_vector}) relies on
\begin{equation}
\V = \{ \ur \in V \ | \ \left(\ur, \grad \varphi \right)_2 = 0, \ \forAll{\varphi}{\W}\},
\end{equation}
the space of functions in $V$ with weak zero divergence. We recall from \cite[Corollary 4.4]{hiptmair2002finite} that the Poincar\'e--Friedrichs--type inequality
\begin{equation}
\|\ur\|_2 \leq C_\mathrm{F} \| \curl \ur \|_2
\label{eq:intro_poincare}
\end{equation}
holds, for all $\ur \in \V$ and $\V$ can be endowed with the norm $\| \ur \|_{\V} := \|\curl \ur\|_2$, see also \cite{amrouche1998vector} for the more general case of multiply connected $D$. Then for $\mf{J}\in L^2 \left(D \right)^3$ the weak formulation reads, find $\mf{A}\in \V$, such that 
\begin{equation}
\int \limits_{D} \nu \left(|\curl \Ar| \right) \curl \Ar \cdot \curl \mf{v} \ \d x = \int \limits_{D} \mf{J} \cdot \mf{v} \ \d x, \quad \forall \mf{v} \in  \V.
\label{eq:apriori_weak}
\end{equation}
Equation (\ref{eq:apriori_weak}) can be written more compactly, by introducing the vector function $\h: \R^3 \rightarrow \R^3,\ \h \left(\mf{r} \right) \defi \nu \left(|\mf{r}| \right) \mf{r}$, as
\begin{equation}
\int \limits_{D} \h \left(\curl \Ar \right) \cdot \curl \mf{v} \ \d x = \int \limits_{D} \mf{J} \cdot \mf{v} \ \d x, \quad \forall \mf{v} \in  \V.
\label{eq:apriori_weak_flux}
\end{equation}
Also, let $\V^*$ denote the dual space of $\V$, by introducing the operator $\A : \V \rightarrow \V^*$ as 
\begin{equation}
\langle \A \ur,\vr \rangle \defi \int \limits_{D} \h \left(\curl \mf{u} \right) \cdot \curl \mf{v} \ \d x, 
\end{equation}
we obtain $\Ar \in \V$ as the solution of
\begin{equation}
\langle \A \Ar,\vr \rangle = \left(\mf{J},\mf{v} \right)_2, \quad \forall \mf{v} \in  \V.
\end{equation}
Existence and uniqueness is guaranteed by the Zarantonello Lemma \cite{zeidler66025nonlinear} as (\ref{eq:bh_prop_reluctivity}) implies
\begin{subequations}
\begin{align}
\langle \A \ur, \ur -\vr \rangle - \langle \A \vr, \ur - \vr \rangle &\geq \fmin \|\ur - \vr\|_{\V}^2, \\
| \langle \A \ur,\mf{w} \rangle - \langle \A \vr, \mf{w} \rangle | &\leq 3 \fmax \| \ur - \vr \|_{\V}  \| \mf{w} \|_{\V},
\end{align}
\label{eq:apriori_monoLip}%
\end{subequations}
see \cite{langer2006coupled}, i.e., the strong monotonicity and Lipschitz continuity of the nonlinear operator $\A$. Moreover, we have the estimate 
\begin{equation}
\|\Ar\|_{\V} \leq \frac{C_{\mathrm{F}}}{\fmin} \|\Jr\|_2.
\label{eq:apriori}
\end{equation}

\begin{remark}\label{rmk:spatial_f}
Typically, for the accurate modelling of magnetic devices an interface problem with several materials, e.g., iron and air, has to be studied. Then, the piecewise defined magnetic reluctivity also satisfies (\ref{eq:bh_prop_reluctivity}) and the problem is still found to be well--posed, see \cite{bachinger2005numerical,Heise_2004}. Also the extension to multiply connected domains would mainly require a modification of the divergence free condition to ensure the norm equivalence \cite{bachinger2005numerical}. In the more general case of $f \left(\x,\cdot \right)$ with arbitrary $x$--dependence, $f \left(\cdot,s \right)$ must additionally be measurable \cite{yousept2013optimal} for all $s$ and consequently $3+1$--dimensional random fields would occur. However, as our focus lies on the nonlinearity in the stochastic setting, for simplicity, we restrict ourselves to simply connected domains with only one homogeneous nonlinear material.
\end{remark}

\section{Uncertainties in the Nonlinear Material Law and Stochastic Formulation}
\label{sec:stochastic_ms}
Randomness is incorporated, as usual, by introducing a probability space $\left(\Omega,\sigmaF,\measP \right)$ and modeling the material law $f$ as a random field $f:\Omega \times \R_0^+ \rightarrow \R_0^+$. A stochastic formulation is based on the following assumption.  
\begin{assumption}\label{as:fstoch}
The stochastic material law $f(\omega,\cdot)$ satisfies Assumption \ref{as:bh} almost surely (a.s.) with constants $\fmin$ and $\fmax$ independent of $\omega$.
\end{assumption}
~\newline
According to Lemma \ref{lem:nu} this implies, that the stochastic reluctivity $\nu:\Omega \times \R_0^+ \rightarrow \R^+$, defined as $\nu \left(\omega,s \right)\defi f \left(\omega,s \right)/s$, for all $s \in \R^+$ satisfies \eqref{eq:bh_prop_reluctivity} a.s., with constants $\fmin,\fmax$ independent of $\omega$.

Setting for $\mf{r} \in \R^3$, $\h \left(\omega,\mf{r} \right)= \nu \left(\omega,|\mf{r}| \right)\mf{r}$, the stochastic curl--curl problem reads a.s. as
\begin{equation}
\int \limits_{D} \h \left(\cdot,\curl \Ar \right) \cdot \curl \mf{v} \ \d x = \int \limits_{D} \mf{J} \cdot \mf{v} \ \d x, \quad \forall{\vr} \in \V.
\label{eq:apriori_weak_stoch}
\end{equation}
By Assumption \ref{as:fstoch} we have a unique solution $\Ar \in L^p (\Omega,\V)$ for all $p \in \mathbb{N}$ by means of (\ref{eq:apriori}).

\subsection{Random Input Discretization by the Truncated Karhunen--Lo\`{e}ve Expansion}
\label{sec:KL}
In the following we restrict ourselves to an open interval $\I \subset \R^+$ and define for $f$ (resp. $\nu$), $\tilde{f} := f|_{\Ib}$. Restricting the uncertainty of the material law to a specific interval is a reasonable assumption in practice and in particular suitable to satisfy the constraints (\ref{as:bh_f3}) and (\ref{as:bh_f4}) in the presence of randomness. A globally defined $f$ can be obtained by a differentiable prolongation from $I$ to $\R^+_0$.

To be used in computer simulations the input random field $\g : \Omega \times \Ib \rightarrow \R^+$ has to be discretized. This is achieved here by introducing the (linear) truncated Karhunen--Lo\`{e}ve expansion 
\begin{equation}
\g_M \left(\omega,s \right)= \E_{\g} \left(s \right) + \sum_{n=1}^{M}  \sqrt{\lambda_n} b_n \left(s \right)Y_n \left(\omega \right).
\label{eq:KL_trunc}
\end{equation}
A stronger dependency of $\g_M$ w.r.t. $Y_n$ might be obtained by performing a Karhunen--Lo\`{e}ve expansion for $\log(\g)$, rather than $f$ \cite{babuvska2007stochastic}. However, a linear expansion as (\ref{eq:KL_trunc}) with a moderate number $M$, might be beneficial in numerical approximations.

We recall the definition of the expected value and covariance function
\begin{align}
\E_{\g} \left(s \right) &\defi \int \limits_{\Omega} \g \left(\omega,s \right) \ \d \measP(\omega), \\
\cov_{\g} \left(s,t \right) &\defi \int \limits_{\Omega} \left(\g \left(\omega,s \right) - \E_{\g} \left(s \right)\right) \left(\g \left(\omega,t \right) - \E_{\g} \left(t \right) \right) \ \d \measP(\omega), \label{eq:bh_meancov}
\end{align}
for $s,t \in \Ib$. Based on the following assumption, some important properties of \eqref{eq:KL_trunc} are briefly recalled here, see, e.g., \cite{loeve1963probability,Frauenfelder_2005,babuvska2005solving,Schwab_2006}.
\begin{assumption}\label{as:KL}
The image of $Y_n, n=1,\dots,M$ is uniformly bounded. Additionally, the covariance satisfies $\cov_{\g} \in \C^l \left(\overline{\I\times \I} \right)$, with $l>2$. 
\end{assumption}
~\newline
Consider the self--adjoint and compact operator $\covop_{\g} : L^2 \left(\I \right) \rightarrow L^2 \left(\I \right)$, given by
\begin{equation}
\left(\covop_{\g} u \right)\left(s_1 \right) \defi \int \limits_{\I} \cov_{\g} \left(s_1,s_2 \right) u \left(s_2 \right) \ \d s_2.
\end{equation}
Then $\left(\lambda_n,b_n \right)_{n=1}^{\infty}$ are eigenpairs of 
\begin{equation}
\covop_{\g} u = \lambda u,
\label{eq:eig_problem}
\end{equation}
where the $\left(b_n \right)_{n=1}^{\infty}$ are orthonormal in $L^2 \left(I \right)$ and $\lambda_1 \geq \lambda_2 \geq \cdots \geq 0$. In (\ref{eq:KL_trunc}), the random variables $\left(Y_n\right)_{n=1}^{\infty}$ subject to 
\begin{equation}
Y_n = \frac{1}{\sqrt{\lambda_n}} \int \limits_{I} \left( \g(\omega,s) - \E_{\g}(s) \right) b_n(s) \d s,
\label{eq:ref}
\end{equation}
for $\lambda_n>0$, are centered and uncorrelated with unit variance. We assume that they are independent and denote their image with $\Gamma_n$ and set $\Gamma := \prod_{n=1}^M \Gamma_n$.

Approximation properties of (\ref{eq:KL_trunc}) are well studied, in particular the $L^2$--error
\begin{equation}
\|\g - \g_M \|_{L^2 \left(\Omega \times I \right)}^2 = \sum_{n=M+1}^{\infty} \lambda_n,
\label{eq:KL_L2}
\end{equation}
is optimal among all $M$--term approximations, see \cite{Schwab_2006}. The remainder in (\ref{eq:KL_L2}) can be bounded, investigating the decay rate of the eigenvalues \cite{Frauenfelder_2005,Schwab_2006}. We obtain
\begin{equation}
0 \leq \lambda_n \leq C n^{-l},
\label{eq:decay}
\end{equation}
with $C>0$, see \cite[Proposition 2.5]{Frauenfelder_2005} and hence,
\begin{equation}
\|\g - \g_M \|_{L^2 \left(\Omega \times I \right)}^2 \leq C_l M^{1-l}.
\end{equation}
An analytic covariance yields an exponential decay. 

Except for several simple covariance functions, e.g., the exponential kernel, the eigenvalue problem (\ref{eq:eig_problem}) has to be solved numerically. This has been addressed, e.g., in \cite{Schwab_2006,oliveira2014spectral}. To assure global differentiability, we employ a Galerkin approximation based on a B--spline space. This leads to a generalized discrete eigenvalue problem. Let $I_{\mathrm{min}}:=\min \left(\Ib \right), \ I_{\mathrm{max}}:= \max \left(\Ib \right)$, and $q,N \in \mathbb{N}$ and consider a (quasi-uniform) sequence 
\begin{equation}
\I_{\mathrm{min}} = s_0 < s_1 < \dots < s_{N} = \I_{\mathrm{max}},
\end{equation}
referred to as mesh $\tau_N$. To $\tau_N$ we associate the B--spline space $\SP^{q,k}_{N}$ of polynomials of degree $q$ on each sub--interval of $\tau_N$ with global continuity $k \in \mathbb{N}$. For the standard iterative construction procedure of B--splines, see, e.g., \cite{de1978practical}. The Galerkin approximation of (\ref{eq:eig_problem}) reads, following \cite[p.111]{Schwab_2006}, find $\left(\lambda_{N,n},b_{N,n} \right)_{n\geq 1}^{\infty} \subset \R \times \SP^{q,k}_N$ subject to
\begin{equation}
\int \limits_{I}\int \limits_{I} b_{N,n} \left(s_1 \right) \cov_{\g} \left(s_1,s_2 \right) v_N \left(s_2 \right)  \d s_1 \d s_2 = \lambda_{N,n} \int \limits_{I} b_{N,n} \left(s \right) v_N \left(s \right) \d s, 
\label{eq:eig_discrete}
\end{equation}
for all $v_N \in \SP^{q,k}_{N}$. By means of the numerically computed eigenpairs, the discrete Karhunen--Lo\`{e}ve expansion reads
\begin{equation}
\g_{M,N}\left(\omega,s \right) = \E_{\g} \left(s \right) + \sum_{n=1}^{M} \sqrt{\lambda_{N,n}} b_{N,n} \left(s \right) Y_{N,n} \left(\omega \right).
\end{equation}
Based on the approximation properties of B-splines, see, e.g., \cite{graham1985iterated}, the Galerkin error contribution can be bounded, following \cite{Schwab_2006}, as
\begin{equation}
\| \g_M - \g_{M,N} \|_{L^2 \left(\Omega \times I \right)}^2  \leq C_M N^{-2 \left(q+1 \right)},
\label{eq:KL_bounds}
\end{equation} 
with $C_M>0$ and $q+1<l$. 


\subsection{Application to the Nonlinear Magnetic Material Law}
In the following we employ a Karhunen--Lo\`{e}ve expansion to discretize the stochastic magnetic material law. We thereby focus on the truncation error, i.e., assume that $N$ is sufficiently large. As in the deterministic case we set $\g\D{i} := \partial_s^i \g \left(\cdot,s \right)$. To ensure the solvability of the PDE the $L^2 \left(\Omega \times I \right)$--convergence of $\g$ to $\g_M$ is not sufficient and we additionally require that $\g_M\D{i}$ satisfies (\ref{eq:bh_shapeconstraint}). 

\begin{lemma}
Let Assumption \ref{as:KL} hold true, then there exists $M_0 \in \mathbb{N}$, such that for $M>M_0$ 
\begin{equation}
\fmin_0 \leq \g\D{1}_M \left(\omega,s \right)  \leq \fmax_0,
\label{eq:nud_unif12}
\end{equation}
holds with positive constants $\fmin_0,\fmax_0$.
\end{lemma}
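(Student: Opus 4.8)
The plan is to reduce the two-sided bound on the truncated derivative to a single statement about the tail of a differentiated series. Write $\g_M\D{1}(\omega,s)=\E_\g\D{1}(s)+\sum_{n=1}^{M}\sqrt{\lambda_n}\,b_n\D{1}(s)\,Y_n(\omega)$ and set $r_M(\omega,s)\defi\sum_{n=M+1}^{\infty}\sqrt{\lambda_n}\,b_n\D{1}(s)\,Y_n(\omega)$, so that $\g_M\D{1}=\g\D{1}-r_M$. By Assumption \ref{as:fstoch} the realization $\g(\omega,\cdot)$ obeys \eqref{as:bh_f2} almost surely, hence $\fmin\le\g\D{1}(\omega,s)\le\fmax$ a.s. for every $s$. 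The whole assertion therefore reduces to producing a bound $\sup_{\omega,s}|r_M(\omega,s)|\le\fmin/2$ for all $M>M_0$: with such an $M_0$ one takes $\fmin_0\defi\fmin/2>0$ and $\fmax_0\defi\fmax+\fmin/2$, and the claimed estimate is immediate.

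Next I would make the remainder uniform in $\omega$. By Assumption \ref{as:KL} there is an $R<\infty$ with $|Y_n(\omega)|\le R$ for all $n$ and a.e. $\omega$, so that $\sup_{\omega,s}|r_M(\omega,s)|\le R\,\sup_{s\in\I}\sum_{n=M+1}^{\infty}\sqrt{\lambda_n}\,|b_n\D{1}(s)|$. It thus suffices to show that the $s$-uniform partial-sum tail of $\sum_{n}\sqrt{\lambda_n}\,\|b_n\D{1}\|_{L^\infty(\I)}$ tends to zero; the same convergence simultaneously legitimises the term-by-term differentiated Karhunen--Lo\`eve series, i.e. the identity $\g\D{1}=\E_\g\D{1}+\sum_n\sqrt{\lambda_n}b_n\D{1}Y_n$ used above.

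For the convergence I would combine two consequences of the covariance regularity. First, \eqref{eq:decay} supplies the algebraic eigenvalue decay $0\le\lambda_n\le C n^{-l}$, so that $\sum_n\lambda_n^{1/2}\le C\sum_n n^{-l/2}<\infty$ precisely because $l>2$. Second, differentiating the eigenvalue relation \eqref{eq:eig_problem} in the form $\lambda_n b_n\D{1}(s_1)=\int_{\I}\partial_{s_1}\cov_\g(s_1,s_2)\,b_n(s_2)\,\d s_2$ and invoking Mercer's theorem for the mixed derivative kernel $\partial_{s_1}\partial_{s_2}\cov_\g$ (continuous, since $\cov_\g\in\C^2\subseteq\C^l$) yields $\sum_n\lambda_n\,(b_n\D{1}(s))^2=\partial_{s_1}\partial_{s_2}\cov_\g(s,s)$, a finite continuous function of $s$; as its partial sums increase to it, Dini's theorem upgrades this to a bound that is uniform on $\Ib$. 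A Cauchy--Schwarz argument would then pair the summable sequence $(\lambda_n^{1/2})_n$ against a suitably weighted version of this Mercer bound, the weight being financed by the surplus smoothness $l-2>0$.

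The hard part is exactly this last balancing act. The naive per-term estimate drawn from the Mercer identity only gives $\sqrt{\lambda_n}\,|b_n\D{1}(s)|\le C$, i.e. boundedness but not summability, and pairing it crudely against $\lambda_n\le Cn^{-l}$ wastes the decay. What is genuinely needed is a \emph{weighted} diagonal bound of the type $\sum_n n^{\sigma}\lambda_n(b_n\D{1}(s))^2\le C$ with $\sigma>1$, obtained by spending the extra $l-2$ derivatives of $\cov_\g$ on the differentiated kernel and using Dini to make it uniform on $\Ib$; the interplay between the algebraic eigenvalue decay and the growth of the eigenfunction derivatives $\|b_n\D{1}\|_{L^\infty(\I)}$ is the crux. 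It is this interplay that enforces the smoothness requirement on $l$ and, through the resulting tail, fixes the threshold $M_0$.
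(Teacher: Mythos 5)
Your reduction is exactly the paper's: split off the tail $r_M$ of the differentiated Karhunen--Lo\`eve series, bound it uniformly in $\omega$ and $s$ using the uniform boundedness of the $Y_n$, and absorb it into the constants ($\fmin_0 = \fmin - r_{M_0}$, $\fmax_0 = \fmax + r_{M_0}$, or your equivalent $\fmin/2$ variant). The gap is in the one step carrying all the analytic weight: summability of $\sum_n \sqrt{\lambda_n}\,\|b_n\D{1}\|_{L^{\infty}(\I)}$. As you yourself concede, the Mercer identity for $\partial_{s_1}\partial_{s_2}\cov_{\g}$ only yields the per-term bound $\sqrt{\lambda_n}\,|b_n\D{1}(s)| \leq C$, which says nothing about the tail; and the weighted diagonal bound you then posit, $\sum_n n^{\sigma}\lambda_n\,(b_n\D{1}(s))^2 \leq C$ with $\sigma>1$, is precisely the statement that needs proof and does not follow from the tools you invoke. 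Spending the surplus smoothness the way you propose --- Mercer applied to $\partial_s^k\partial_t^k\cov_{\g}$ for $2k \leq l$ --- only produces the \emph{unweighted} diagonal identities $\sum_n \lambda_n\,(b_n^{(k)}(s))^2 = \partial_s^k\partial_t^k\cov_{\g}(s,s)$, i.e. $\|b_n^{(k)}\|_{L^{\infty}(\I)} \leq C\lambda_n^{-1/2}$: the same non-summable $\lambda_n^{-1/2}$ scaling no matter how many derivatives you spend, and Dini's theorem upgrades pointwise to uniform convergence but introduces no weights. So the ``balancing act'' you flag as the crux is genuinely missing, and your sketch shows it cannot be closed by Mercer plus Cauchy--Schwarz alone.

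The paper closes exactly this gap by citing \cite[Theorem 2.24]{Schwab_2006}: for covariances in $\C^l(\overline{\I\times\I})$ the eigenfunction derivatives satisfy $\|b_n\D{1}\|_{L^{\infty}(\I)} \leq C_{\delta}\,\lambda_n^{-\delta}$ for every $\delta>0$ --- a strictly stronger estimate than the Mercer scaling $\lambda_n^{-1/2}$, proved by bootstrapping through the integral operator rather than through the Mercer expansion. Combined with the eigenvalue decay \eqref{eq:decay} this gives $\sqrt{\lambda_n}\,\|b_n\D{1}\|_{L^{\infty}(\I)} \leq C_{\delta}\,\lambda_n^{1/2-\delta} \leq C\,n^{-l(1/2-\delta)}$, summable precisely when $0<\delta<1/2-1/l$ (this is where $l>2$ enters), whence the tail bound $r_M \leq C_{\delta,l}\,M^{1-l/2+\delta l} \to 0$ and the choice of $M_0$ follow. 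Your framing of the reduction is therefore correct and consistent with the paper, but to finish you must either quote such an eigenfunction-derivative estimate or reprove it; it is a theorem about integral operators with smooth kernels, not a corollary of Mercer's theorem.
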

\begin{proof}
Based on Assumption \ref{as:KL} it follows by \cite[Theorem 2.24]{Schwab_2006} and \eqref{eq:decay}, that for $\lambda_n \neq 0$,
\begin{equation}
\| \g\D{1} \left(\omega,\cdot \right) - \g_M\D{1} \left(\omega,\cdot \right) \|_{L^{\infty} \left(\I \right)} \leq C_{\delta} \sum_{n=M+1}^{\infty} \lambda_n^{1/2-\delta} \leq \underbrace{C_{\delta,l} M^{1- l/2 + \delta l}}_{\defir r_M},
\end{equation}
a.s., with $0 < \delta < 1/2 - 1/l$ and a positive constant $C_{\delta}$ additionally depending on the covariance and $|\Gamma|$. Hence, the sum on the right--hand--side converges for all $M$. Moreover, we can choose $M \geq M_0$ large enough such that for almost all $\omega \in \Omega$, $s \in \I$,
\begin{equation}
\underbrace{\fmin - r_{M_0}}_{\defir \fmin_0} \leq \g\D{1}_M \left(\omega,s \right)  \leq \underbrace{\fmax + r_{M_0}}_{\defir \fmax_0}, \label{eq:nud_unif} 
\end{equation}
with $r_{M_0}\in \left(0,\alpha \right)$. 
\end{proof}
~\newline
Let $f_M$ be a continuously differentiable prolongation of $\g_M$ from $I$ to $\R_0^+$, such that Assumption \ref{as:fstoch} is satisfied. This can be achieved, e.g., by the construction given in \cite{Heise_2004}. Then $\nu_M \left(\omega,s \right) \defi f_M \left(\omega,s \right)/s$ can be used as material coefficient for the stochastic problem. Complementary to this result, formulas assuring that the shape constraint (\ref{eq:nud_unif}) is verified, will be derived in the next section for a concrete setting.
\begin{remark}\label{rmk:normal}
We observe that we cannot model $f$ or $\nu$, as normal (or log--normal) random field, as their derivatives would not be bounded uniformly. In particular the trajectories of $f$ would be non--monotonic with a probability greater than zero.
\end{remark}

\subsubsection{Practical Realization and Numerical Example}
\label{subsec:KL}
\begin{figure}[t!]
\centering
\includegraphics{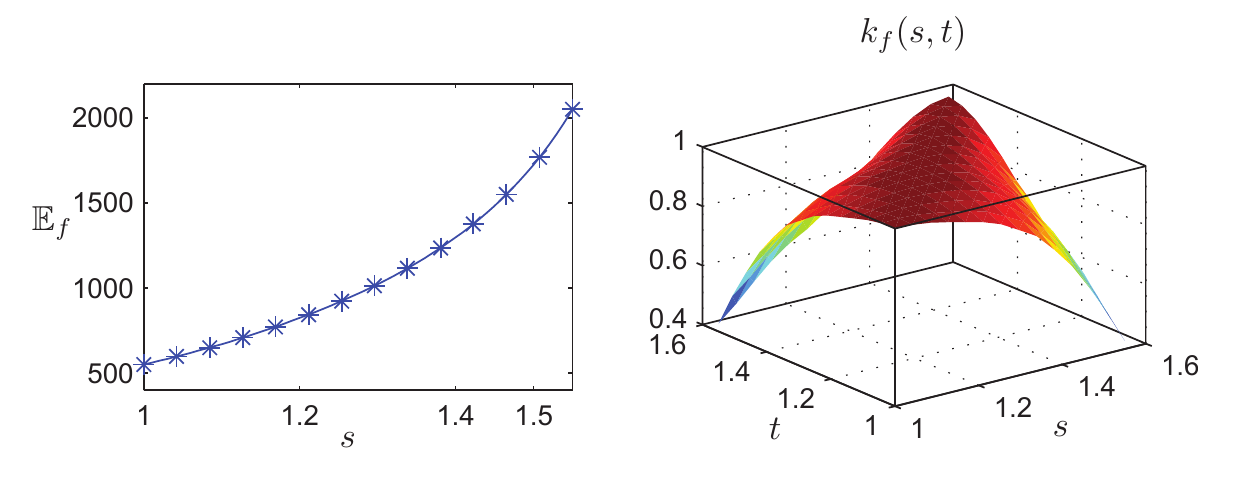}
\caption{Expected value $\E_{f}$ and correlation function $k_{f}$ for the data given in \cite{Rama_2012}.}
\label{fig:NE_ex1}
\end{figure}
In this section a practical realization under minimal assumptions on $f$ is discussed. Measured data is supposed to be available at equidistant points $\I_{\min{}} \leq \hat{s}_1 <\hat{s}_2 < \cdots < \hat{s}_R \leq \I_{\max{}}$. We introduce the table
\begin{equation}
\left\{\left(\hat{s}_i,\hat{f}_{ij} \right), \ i=1,\cdots,R, \ j=1,\cdots,Q\right \}
\end{equation}
and assume that the data is monotonic, i.e., $\hat{f}_{i_1 j} \leq \hat{f}_{i_2 j},$ for $i_1 \leq i_2$ and $j=1,\dots,Q$. The data is interpolated using $\C^1$ monotonicity-preserving cubic splines, see \cite{fritsch1980monotone}. For data with increased oscillations due to measurements a procedure as outlined in \cite{Pechstein_2006,Reitzinger_2002} should be used. 

The situation $Q=1$ is very common in practice, and additional assumptions on the covariance are needed in this case. To solve the Karhunen--Lo\`{e}ve eigenvalue problem, as outlined in Section \ref{sec:KL}, the correlation function
\begin{equation}
k_{\g} \left(s,t \right) = \frac{\cov_{\g} \left(s,t \right)}{\sqrt{\cov_{\g} \left(s,s \right)}\sqrt{\cov_{\g} \left(t,t \right)}},
\label{eq:corr}
\end{equation}
is chosen to be approximated by the Gaussian kernel
\begin{equation}
k_{G} \left(s,t \right) = \delta^2 \e^{-\left( \frac{s-t}{L} \right)^2},
\end{equation}
where $L$ denotes the correlation length and $\delta>0$ a parameter. By rescaling with the interpolated sample variance $\left(\mathrm{Var}_{\hat{f}_i} \right)_{i=1}^R$ we obtain the covariance. The associated discrete eigenpairs $\left(\lambda_n,b_n \right)_{n=1}^N$ are obtained by discretization with $\SP_{N}^{3,1}$. Then, the random material relation is approximated by 
\begin{equation}
\g_M \left(\omega,s \right) = \E_{\g} \left(s \right) + \delta \sum_{n=1}^{M} \sqrt{\lambda_n} b_n \left(s\right) Y_n \left(\omega \right).
\label{eq:NE_KL}
\end{equation}
In (\ref{eq:NE_KL}) $\E_{\g}$ is obtained by projecting the interpolated sample mean $\left(\E_{\hat{f}_i} \right)_{i=1}^{R}$ on $\SP_{N}^{3,1}$. 

Here, we determine $M$ such that the relative information content satisfies
\begin{equation}
\frac{\sum_{n=1}^{M} \lambda_n}{\sum_{n=1}^{M'} \lambda_n} > 0.95,
\end{equation}
where $M' \gg M$. For a more rigorous approach to the recovery of the Karhunen--Lo\`{e}ve approximation from measured data by means of an a posteriori error analysis, we refer to \cite{babuvska2003solving}. Sample realizations of the $Y_n$ can be determined by (\ref{eq:ref}), however here, we model them to be distributed uniformly as $\mathbb{U} \left(-\sqrt{3},\sqrt{3}\right)$. The parameter $\delta$ is used to assure that (\ref{eq:NE_KL}) satisfies the shape constraints (\ref{eq:nud_unif}), see also \cite[p. 1282]{babuvska2005solving} for a related discussion in the context of a linear material coefficient. In particular we only need to assure that $\g_M$ is monotonic. If $\E_{\g}$ can be represented by a spline function, as is the case in the present setting, a simple condition for $\delta$ can be derived to this end. To simplify notation let $\eta_M \left(s\right) =\sum_{n=1}^{M} \sqrt{\lambda_n} b_n(s)$ and $\eta_{M,i}$ be obtained by substituting $b_n$ in the previous relation by its $i$--th spline coefficients. Let $\left(\E_{\g,i}\right)_{i=1}^{N}$ denote the coefficient vector of $\E_{\g} \in \SP_{N}^{3,1}$. Then from \cite{de1978practical} we recall that a sufficient condition for a B--spline to be monotonic is that its coefficients are increasing and hence, monotonicity can be assured by
\begin{equation}
\delta < \min_{i=2,\dots,N}\frac{\E_{\g,i} - \E_{\g,i-1}}{\sqrt{3} |\eta_{M,i}-\eta_{M,i-1}|},
\label{eq:delta}
\end{equation}
where we minimize only over those $i$ with nonzero denominator. In a more general setting one could derive a similar expression by minimizing $\delta_s$ given by
\begin{equation}
\E_{\g}\D{1} \left(s\right) > \delta_s \sqrt{3} |\eta\D{1}_M \left(s\right)|,
\end{equation}
over all $s\in \Ib$.

Let us consider the material uncertainty of an electrical machine. In \cite{Rama_2012}, measured data\footnote{The simulation is based on the original data kindly provided by St\'ephane Cl\'enet.}, representing the material properties from twenty--eight machine stator samples ($Q=28$) from the same production chain was presented. The interval of interest is given by $\Ib = [1,1.55]$ and measurements were taken at $R=14$ equidistant points. For the given criteria we truncate the Karhunen--Lo\`{e}ve expansion with $M=3$. Figure \ref{fig:NE_ex1} depicts both the expected value and the correlation function. For illustration we compare the setting for three different correlation lengths $L=1/20$, $L=1/10$ and $L=1/2$, respectively. Figure \ref{fig:NE_KL} depicts ten sample realizations for each correlation length, where the coefficient $\delta$ is chosen according to (\ref{eq:delta}) and a uniform mesh with $N=60$ spline basis functions is used. It can be readily observed, that a smaller correlation length, corresponding to trajectories with increased oscillations, demands for a smaller $\delta$, i.e., a smaller perturbation magnitude. Also, the largest correlation length $L=1/2$ gives the best agreement with the measured data and will be chosen in what follows.
\begin{figure}[t!]
\centering
\includegraphics{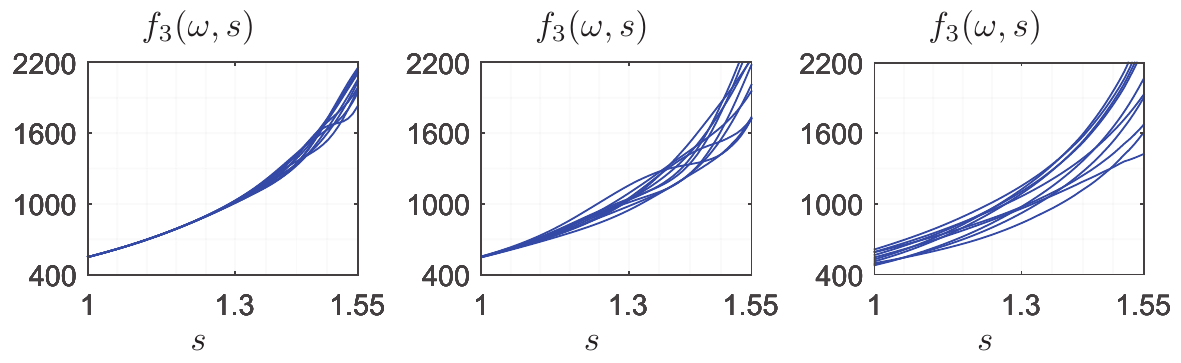}
\caption{Ten sample discretizations for $M=3$, with correlation lengths $L=1/20$, $L=1/10$ and $L=1/2$, respectively. The perturbation amplitudes are $\delta = 0.89$, $\delta=2.10$ and $\delta=2.85$, respectively.}
\label{fig:NE_KL}
\end{figure}

\subsection{Truncation Error and High--Dimensional Deterministic Problem}
We are now going to investigate the modelling error arising from a finite dimensional noise approximation, i.e., when $\nu$ is replaced by $\nu_M$. So far we have explained how this approximation can be achieved by means of the Karhunen--Lo\`{e}ve expansion, however, hereafter, we do not restrict ourselves to this specific case anymore. For given $\nu_M$, for $\r \in \R^3$, let $\h_M \left(\cdot,\r \right) \defi \nu_M \left(\cdot,|\r| \right)\r$ denote the associated vector function. Then $\Ar_M$ is defined a.s. as the solution of 
\begin{equation}
\int \limits_{D} \h_M \left(\cdot,\curl \Ar_M \right) \cdot \curl \mf{v} \ \d x = \int \limits_{D} \mf{J} \cdot \mf{v} \ \d x, \quad \forall{\vr} \in \V.
\label{eq:apriori_weak_stoch_KL}
\end{equation}
 
\begin{proposition}
\label{prp:stability}
Let $\Ar$ and $\Ar_M$ be the solution of (\ref{eq:apriori_weak_stoch}) and (\ref{eq:apriori_weak_stoch_KL}), respectively. Moreover, let $f$ as well as $f_M$ satisfy Assumption \ref{as:fstoch}, with constants $\fmin,\fmin_0$, respectively. Then we have a.s.
\begin{equation}
 \| \Ar - \Ar_M \|_{\V} \leq \|\nu - \nu_M\|_{L^{\infty} \left(\mathbb{R}^+ \right)} \frac{C_{\mathrm{F}} \|\mf{J}\|_2}{\fmin \fmin_0}.
\label{eq:KL_error_moments}
\end{equation}
\end{proposition}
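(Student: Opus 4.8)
The plan is to run the standard strong--monotonicity estimate for the difference of two monotone problems sharing the same data. Observe first that, since the right--hand side $(\mf{J},\vr)_2$ is identical in \eqref{eq:apriori_weak_stoch} and \eqref{eq:apriori_weak_stoch_KL}, subtracting the two weak formulations yields a.s. the Galerkin--type identity
\begin{equation}
\int \limits_{D} \h_M(\cdot,\curl \Ar_M)\cdot\curl \vr \ \d x = \int \limits_{D} \h(\cdot,\curl \Ar)\cdot\curl \vr \ \d x, \qquad \forall \vr\in\V.
\end{equation}
Next I would record that, since $f_M$ satisfies Assumption \ref{as:fstoch} with monotonicity constant $\fmin_0$, Lemma \ref{lem:nu} and the computation leading to \eqref{eq:apriori_monoLip} show that the operator associated with $\h_M$ is strongly monotone with constant $\fmin_0$.

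The second step is to test with the difference itself. Taking $\vr=\Ar-\Ar_M$ and using strong monotonicity of the perturbed operator gives
\begin{equation}
\fmin_0\|\Ar-\Ar_M\|_{\V}^2 \leq \int \limits_{D} \bigl(\h_M(\cdot,\curl \Ar)-\h_M(\cdot,\curl \Ar_M)\bigr)\cdot\curl(\Ar-\Ar_M) \ \d x.
\end{equation}
I then replace the term containing $\h_M(\cdot,\curl \Ar_M)$ by means of the identity above (evaluated at $\vr=\Ar-\Ar_M$), which collapses the right--hand side to the consistency defect
\begin{equation}
\int \limits_{D} \bigl(\h_M(\cdot,\curl \Ar)-\h(\cdot,\curl \Ar)\bigr)\cdot\curl(\Ar-\Ar_M) \ \d x.
\end{equation}
The decisive point is that both vector functions are now evaluated at the \emph{same} argument $\curl \Ar$, so that pointwise $\h_M(\cdot,\curl \Ar)-\h(\cdot,\curl \Ar)=\bigl(\nu_M(\cdot,|\curl \Ar|)-\nu(\cdot,|\curl \Ar|)\bigr)\curl \Ar$.

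Finally I would estimate this defect by Cauchy--Schwarz, pulling out $\|\nu-\nu_M\|_{L^\infty(\mathbb{R}^+)}$ (the supremum over $\mathbb{R}^+$ dominates the reluctivity difference sampled along $|\curl \Ar(x)|$), which bounds the right--hand side by $\|\nu-\nu_M\|_{L^\infty}\|\Ar\|_{\V}\|\Ar-\Ar_M\|_{\V}$. Dividing through by $\fmin_0\|\Ar-\Ar_M\|_{\V}$ and inserting the a priori estimate \eqref{eq:apriori}, namely $\|\Ar\|_{\V}\leq (C_{\mathrm{F}}/\fmin)\|\mf{J}\|_2$, yields exactly \eqref{eq:KL_error_moments}. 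I do not expect a genuine obstacle here; the only point demanding care is the add--and--subtract, that is, invoking monotonicity of the perturbed operator (constant $\fmin_0$) rather than the unperturbed one and exploiting the shared data so that the residual reduces to the reluctivity difference at the single argument $\curl \Ar$. This is precisely what produces the product $\fmin\fmin_0$ in the denominator, one factor coming from monotonicity and the other from the a priori bound.
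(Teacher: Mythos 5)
Your proof is correct, and it is the mirror image of the paper's argument: the two proofs use the same three ingredients (strong monotonicity, the shared right--hand side, Cauchy--Schwarz plus an a priori bound) but distribute the roles of the two problems oppositely. The paper invokes strong monotonicity of the \emph{unperturbed} operator $\h$ (constant $\fmin$), so after substituting the weak equations the residual is evaluated at $\curl \Ar_M$, i.e.\ it becomes $\h_M(\cdot,\curl\Ar_M)-\h(\cdot,\curl\Ar_M)$, and the proof closes with the a priori bound $\|\Ar_M\|_{\V}\leq C_{\mathrm{F}}\|\Jr\|_2/\fmin_0$ for the perturbed solution. You instead invoke monotonicity of the \emph{perturbed} operator $\h_M$ (constant $\fmin_0$), obtain the residual at $\curl\Ar$, and close with $\|\Ar\|_{\V}\leq C_{\mathrm{F}}\|\Jr\|_2/\fmin$ for the unperturbed solution. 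By the symmetry of the hypotheses (both $f$ and $f_M$ satisfy Assumption \ref{as:fstoch}) the two routes are equally rigorous and land on the identical constant $C_{\mathrm{F}}\|\Jr\|_2/(\fmin\fmin_0)$; the only practical distinction is which operator's monotonicity one is willing to rely on, which would matter only in a setting where one of the two material laws might fail the shape constraint --- not the case here.
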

\begin{proof}
        As $f$ satisfies Assumption \ref{as:fstoch}, we have a uniform strong monotonicity property, i.e., a.s.
        \begin{equation}
           \fmin \|\Ar - \Ar_M \|_{\V}^2 \leq \int \limits_{D} \left(\h \left(\cdot,\curl \Ar \right) - \h \left(\cdot,\curl \Ar_M \right) \right) \cdot \curl \left(\Ar - \Ar_M \right) \ \d x
        \end{equation}							
        and because of equations (\ref{eq:apriori_weak_stoch}) and (\ref{eq:apriori_weak_stoch_KL}) and the Cauchy--Schwarz inequality
        \begin{equation}
            \fmin \|\Ar - \Ar_M \|_{\V} \leq  \|\h_M \left(\cdot,\curl \Ar_M \right) -\h \left(\cdot,\curl \Ar_M \right) \|_2.
        \end{equation}
        For the right--hand--side we further obtain
				\begin{align}
				 \|\h_M (\cdot,&\curl \Ar_M) -\h \left(\cdot,\curl \Ar_M \right) \|_2  \\
        &\leq \left( \int \limits_{D} \left( \left(\nu_M \left(\cdot,|\curl \Ar_M| \right) -\nu \left(\cdot,|\curl \Ar_M| \right) \right) \curl \Ar_M \right)^2 \ \d x\right)^{1/2}  \\
				& \leq \|\nu_M - \nu\|_{L^{\infty} \left(\mathbb{R}^+ \right)} \|\Ar_M\|_{\V}.
        \end{align}
				The result follows from $\| \Ar_M\|_{\V} \leq \frac{C_{\mathrm{F}} \|\mf{J}\|_2}{\fmin_0}$.				
\end{proof}
~\newline
Due to (\ref{eq:KL_error_moments}) we have control of the truncation error. For simplicity, this error is omitted in the following, i.e., we assume that the uncertain input has a finite dimensional noise representation: 
\begin{assumption}\label{as:FN}
The random field $\nu$, resp. $f$, depends (continuously) on $M$ independent random variables solely, i.e., a.s.
\begin{equation}
\nu \left(\Y \left(\omega \right),s \right) = \nu \left(\omega,s \right),
\label{eq:apriori_fdn1}
\end{equation}
where $\Y= \left(Y_1,Y_2,\dots,Y_M \right)$.
\end{assumption}

We recall that $\Y$ may also refer to random variables in closed--form representations of $\nu$ or coefficients in spline models, among others. Based on Proposition \ref{prp:stability} and Assumption \ref{as:FN} it follows from the Doob--Dynkin Lemma \cite{rao2006probability} (cf. \cite{babuvska2002solving}) that we can write 
\begin{equation}
\Ar \left(\Y \left(\omega \right),\x \right) = \Ar \left(\omega,\x \right).
\end{equation}
We recall that the $Y_n$ have a bounded image and a joint probability density function
\begin{equation}
\rho: \Gamma \rightarrow \R^+,
\end{equation}
such that $\rho \left(\Y \right) = \rho_1 \left(Y_1 \right) \rho_2 \left(Y_2 \right) \dots \rho_M \left(Y_M \right)$. For all random variables $X \in L^1_\rho \left(\Gamma \right)$, such that $X \left(\omega \right)= X \left(\Y \left(\omega \right) \right)$ we introduce 
\begin{equation}
\E \left[X \right] = \int \limits_{\Gamma} X \left(\y \right) \rho \left(\y \right) \ \d y.
\end{equation}
We now introduce the following assumption for $f: \Gamma \times \R_0^+ \rightarrow \R_0^+$.
\begin{assumption}\label{as:fparam}
The stochastic material law $f(\y,\cdot)$ satisfies Assumption \ref{as:bh} for $\rho$--almost all $\y \in \Gamma$, with constants $\fmin$ and $\fmax$ independent of $\y$.
\end{assumption}
~\newline
The stochastic problem can be recast into a deterministic one with $3+M$ dimensions. To this end, let $\Hspace \defi L^2_{\rho} \left(\Gamma \right) \otimes \V$ be the closure of formal sums $\mathcal{\ur} = \sum_{i=1}^n v_i \mathbf{w}_i$, where $\{v_i\}_{i=1,n} \subset L^2_{\rho} \left(\Gamma \right)$ and $\{\mathbf{w}_i\}_{i=1,n} \subset \V$, with respect to the inner product 
\begin{equation}
(\ur,\ur)_{\Hspace} = \E \left [(\ur,\ur)_{\V} \right ],
\end{equation}
cf. \cite{babuvska2005solving}.
Let $\h:\Gamma \times \R^3 \rightarrow \R^3$. We seek $\Ar \in \Hspace$ such that
\begin{equation}
\int \limits_{D} \h \left(\y,\curl \Ar \left(\y \right) \right) \cdot \curl \vr \ \d x = \int \limits_{D} \mf{J} \cdot \vr \ \d x, \quad \forAll{\vr}{\V},
\label{eq:apriori_strong_det}
\end{equation}
where $\Ar \left(\y \right) \defi \Ar \left(\y,\cdot \right)$.

\section{A Stochastic Collocation Method for the Nonlinear curl--curl Formulation}
\label{sec:num_prodecure}
In the following, the variables $\x \in D$ and $\y \in \Gamma$ will be referred to as deterministic and stochastic variable, respectively. The solution of (\ref{eq:apriori_strong_det}) requires discretization in both variables as well as a linearization procedure. To this end, we carry out:
\begin{romannum}
	\item
	\emph{deterministic} discretization based on lowest order $\H \left(\curl \right)$--conforming finite elements with maximum stepsize $h$,
	\item
	\emph{stochastic} discretization based on a collocation procedure on a tensor grid or sparse grid of level $q$,
	\item
	$l$--times iteration of the \emph{linearized} system of equations by means of the Ka\u{c}anov or Newton--Raphson method.
\end{romannum}
As we will see in Section \ref{sec:regularity}, the use of global, higher order polynomials over $\Gamma$ is justified by the regularity of the solution, whereas the collocation procedure is particularly attractive for nonlinear problems due to the ease of implementation. We will then proceed by analyzing the approximation error originating from finite element discretization $\errorh$, stochastic collocation $\errorp$ and linearization $\errorl$, respectively. By the triangle inequality these errors can be decomposed as
\begin{equation}
\|\Ar - \Ar_{h,q,l} \|_{\Hspace} \leq \underbrace{ \| \Ar - \Ar_{h} \|_{\Hspace}}_{\defir \errorh} + \underbrace{\| \Ar_{h} - \Ar_{h,q} \|_{\Hspace}}_{\defir \errorp}  + \underbrace{\| \Ar_{h,q} - \Ar_{h,q,l} \|_{\Hspace}}_{\defir \errorl}.
\end{equation}
Additional sources of error can be identified, in particular quadrature errors and the error from numerically solving linear systems of equations. However, these errors will be omitted here. We also claim that all three steps of the proposed scheme commute. This has been shown for the deterministic case in \cite{logg2012automated} and generalizing to the stochastic collocation method is straightforward.  

\subsection{Galerkin Finite Element Approximation}
Equation (\ref{eq:apriori_strong_det}) is approximated in the deterministic variable by the Galerkin finite element method. Higher order schemes are well established and could be employed, however, as our focus lies on the stochastic part, we restrict ourselves to lowest order schemes. We consider discretizations of the Lipschitz polyhedron $D$ with a simplicial mesh $\mesh_h$, with maximum size $h>0$. The mesh is assumed to be quasi--uniform in the sense of \cite[Definition 4.4.13]{brenner2008mathematical}, in particular 
\begin{equation}
\min_{T \in \mesh_h} \mathrm{diam} \left(B_T \right) \geq C_D h,
\label{eq:apriori_quasiuniform}
\end{equation}
where $B_T$ denotes the largest ball contained in $T$. We then introduce the discrete spaces
\begin{align}
V_h :=& \{\ur \in V \ | \ \ur|_{T} = \mf{a}_T + \mf{b}_T \times \x, \quad \mf{a}_T,\mf{b}_T \in \R^3, \quad \forall T \in \mesh_h\}, \\
W_h :=& \{ v \in \H_0^1 \left(D \right) \ | \ v|_{T} = \mf{a}_T \cdot \x + b_T, \quad \mf{a}_T \in \R^3 ,b_T \in \R, \quad \forall T \in \mesh_h\},
\end{align}
i.e., $V_h$ and $W_{h}$ are spanned by lowest order N\'ed\'elec and Lagrange elements, respectively. As in the continuous case, the space of (discrete) divergence free functions, is introduced as
\begin{equation}
\V_h := \{\ur_h \in V_h \ | \ \left(\ur_h,\grad v_h \right)_2 = 0,  \quad \forall v_h \in W_{h}\}.
\end{equation}
We observe that $\V_h$ is not a subspace of $\V$, as (weak) discrete divergence free functions are not (weak) divergence free in general. The deterministic finite element approximation consists in computing $\Ar_{h}: \Gamma \rightarrow  \V_h$ such that for $\rho$--almost all $\y \in \Gamma$,
\begin{equation}
\int \limits_{D} \h \left(\y,\curl\Ar_h \left(\y \right) \right) \cdot \curl \vr_h \ \d x = \int \limits_{D} \mf{J} \cdot \vr_h \ \d x, \quad \forAll{\vr_h}{\V_h},
\label{eq:apriori_fem}
\end{equation}   
holds. Existence and uniqueness can be established based on a discrete Poincar\'e--Friedrichs inequality \cite[Theorem 4.7]{hiptmair2002finite}, see, e.g., \cite{yousept2013optimal}.

\subsection{Stochastic Collocation Method}
Stochastic discretization is based on a collocation approach using either a tensor or a sparse grid, see, e.g., \cite{xiu2005high,babuvska2007stochastic,babuvska2010stochastic,motamed2013stochastic}. Starting with the (isotropic) tensor grid, following \cite{motamed2013stochastic}, the collocation points are given as
\begin{equation}
H_{q,M}^{\mathrm{T}} \defi \{ y_1^1,y_1^2,\dots,y_1^{n(q)} \} \times \{ y_2^1,y_2^2,\dots,y_2^{n(q)} \} \times \dots \times \{y_M^1,y_M^2,\dots,y_M^{n(q)} \},
\end{equation}
where in each dimension $m=1,\dots,M$, we have $n(q) = p(q) + 1$ collocation points and $N_q = n(q)^M$ in total. Note that $p$ refers to the underlying polynomial degree, which we identify with the level for the tensor grid case as $p(q)=q$. Also, a global index $k$ is associated to the local indices in the usual way \cite{babuvska2010stochastic}. The collocation points are chosen as the roots of the orthogonal polynomials associated to the probability density function $\rho$. As commonly done \cite{nobile2009analysis,babuvska2010stochastic} we introduce the notation $\y = \left(y_m,\yrest{m} \right), \ \yrest{m}= \left(y_1,\dots,y_{m-1},y_{m+1},\dots,y_M \right)$. Let $\Q{p}\left(\Gamma_m \right) $ be the space of polynomials of degree at most $p$ in $\Gamma_m$. Then we introduce in each dimension the one--dimensional Lagrange interpolation operator $\mathrm{I}_{p}^m : \C \left(\Gamma_m;V \right) \rightarrow \Q{p} \left(\Gamma_m \right) \otimes V$ such that 
\begin{equation}
\mathrm{I}_{p}^m \ur \left(\y \right) = \sum_{i=1}^{p+1} \ur \left(y_m^i,\yrest{m} \right) l_{m}^i \left(y_m \right),
\end{equation}
where $l_{m}^i \left(y_m \right)$ is the Lagrange polynomial of degree $p$ associated to the point $y_m^i$. The tensor grid interpolation formula reads as
\begin{equation}
\mathcal{I}_{q,M} \ur \left(\y \right) = \mathrm{I}_{p(q)}^1 \otimes \dots \otimes \mathrm{I}_{p(q)}^M \ur \left(\y \right) = \sum_{k=1}^{N_q} \ur \left(\y_k \right) l_k \left(\y \right),
\label{eq:int_tens}
\end{equation}
where $l_k \left(\y \right)$ is the global Lagrange polynomial associated to the point $\y_k \in H_{q,M}^{\mathrm{T}}$. 

An isotropic tensor grid, with $n$ points in each direction, can only be used for moderate dimensions $M$, as the total number of collocation points grows as $n^M$. Therefore, collocation in higher dimensions is based on sparse grids \cite{barthelmann2000high,nobile2008sparse}. For simplicity we consider isotropic Smolyak grids, solely. Anisotropic sparse grids are discussed, e.g., in \cite{nobile2008anisotropic}. Following \cite{motamed2013stochastic}, let $\mf{j} \in \mathbb{N}_0^M$ be a multi-index and 
\begin{equation}
\mathcal{I}_{\mf{j},M} \ur \left(\y \right) = \sum_{i_1=1}^{p(j_1)+1} \dots \sum_{i_M=1}^{p(j_M)+1} \ur \left( \left(y_{1,j_1}^{i_1},\dots,y_{M,j_M}^{i_M} \right) \right) \prod_{m=1}^M l_{m,j_m}^{i_m} \left(y_m \right),
\end{equation}
the associated multi-dimensional Lagrange interpolation operator, where the Gau{\ss} knots $\{ y_{m,j_m}^{i_m} \}_{i_m=1}^{p(j_m)+1}$ and the Lagrange polynomials $\{l_{m,j_m}^{i_m}\}_{i_m=1}^{p(j_m)+1}$ now also depend on the multi-index $\mf{j}$. For the choice $p(j)=2^{j}$ for $j>0$ and $p(0) = 0$, the Smolyak formula is given by
\begin{equation}
\mathcal{A}_{q,M} \ur \left(\y \right) = \sum_{q-M+1 \leq |\mf{j}|_1 \leq q} (-1)^{q-|\mf{j}|_1} \binom{M-1}{q-|\mf{j}|_1} \mathcal{I}_{\mf{j},M} \ur \left(\y \right).
\label{eq:int_sparse}
\end{equation}
The associated sparse grid is denoted $H_{q,M}^{\mathrm{S}}$. Evaluating \eqref{eq:int_tens} and \eqref{eq:int_sparse} requires solving 
\begin{equation}
\int \limits_{D} \h \left(\y_k,\curl \Ar_h \left(\y_k \right) \right) \cdot \curl \vr_h \ \d x = \int \limits_{D} \mf{J} \cdot \vr_h \ \d x, \quad \forAll{\vr_h}{\V_h},
\label{eq:apriori_col}
\end{equation}
for all collocation points $\y_k$ in $H_{q,M}^{\mathrm{T}}$ and $H_{q,M}^{\mathrm{S}}$, respectively.

\subsection{Linearization}
At each collocation point, iterative linearization is carried out until the linearization error is found to be sufficiently small. The $l$--th iterate, $l \in \mathbb{N}$, consists in computing $\Ar_{h,q,l} \in \Q{q}\left(\Gamma \right) \otimes \V_h$ such that for $k=1,\dots,N_q$
\begin{equation}
\int \limits_{D} \h_{\L} \left(\y_k,\curl \Ar_{h,q,l}(\y_k)\right) \cdot \curl \vr_h\ \d x = \int \limits_{D} \mf{J} \cdot \vr_h \ \d x, \ \forAll{\vr_h}{\V_h},
\label{eq:apriori_fem_col_lin}
\end{equation}   
where $\Q{q}\left(\Gamma \right)$ refers to the polynomial space associated either to tensor or to Smolyak interpolation. For a precise definition of these spaces, see, e.g., \cite{babuvska2010stochastic}. The representation \eqref{eq:apriori_fem_col_lin} follows \cite{el2011guaranteed,chaillou2006computable} and in particular we consider the linearization, 
\begin{equation}
\h_{\L} \left(\cdot,\mf{r}\right) = \nu \left(\cdot,|\mf{r}_{l-1}|\right) \mf{r},
\label{eq:apriori_fluxlin}
\end{equation}
for $\mf{r},\mf{r}_{l-1} \in \R^3$. This is usually referred to as Ka\u{c}anov method or successive substitution in the literature. Note that the case of the Newton--Raphson method, i.e., 
\begin{equation}
\h_{\L} \left(\cdot,\mf{r}\right) =  \nu \left(\cdot,|\mf{r}_{l-1}|\right) \mf{r} + \frac{\nu\D{1} \left(\cdot,|\mf{r}_{l-1}|\right)}{|\mf{r}_{l-1}|} \mf{r}_{l-1} \otimes \mf{r}_{l-1} \left(\mf{r} - \mf{r}_{l-1}\right), 
\end{equation}
is also covered. Under restrictions on the starting point $\Ar_{h,q,0}$ and damping, if necessary, $\Ar_{h,q,l}$ converges to $\Ar_{h,q}$. At each step equation (\ref{eq:apriori_fem_col_lin}) is well--posed by the Lax--Milgram Lemma for both choices. For the Ka\u{c}anov method, this follows by observing that 
\begin{equation*}
{\nu \left(\y,|\curl \Ar_{l-1}|\right) \in [\fmin,\fmax]}. 
\end{equation*}
For the Newton--Raphson method we refer to Lemma \ref{lem:nud} below.

\subsection{Stochastic Regularity and Convergence Analysis}
\label{sec:regularity}
Convergence of the stochastic collocation method introduced above, can be established once the regularity of the solution is known. Whereas for the present nonlinear elliptic problem, the regularity w.r.t. the deterministic variable $\x$ is well known, to our knowledge, the stochastic regularity of the solution $\Ar$ w.r.t. $\y$ has not been investigated. In the case of a linear elliptic PDE it is well known, that under some mild assumptions the solution is an analytic function of the stochastic variable \cite{babuvska2010stochastic,nobile2009analysis,cohen2011analytic}. Similar results hold true for several types of nonlinear problems, see \cite{chkifa2014breaking,chkifa2014high}. Here, the mapping 
\begin{equation}
\y \mapsto \nu \left(\cdot,|\curl \Ar \left(\y\right)|\right)
\label{eq:apriori_complex}
\end{equation} 
is real, but not complex differentiable, see \cite{jack1990methods,bachinger2005numerical}, and this impedes a complex analysis. Moreover, the techniques presented in \cite{chkifa2014breaking}, based on the implicit function theorem, cannot be applied as a norm gap arises. More precisely, the nonlinearity $\h : L^{p}(D)^3 \rightarrow L^q(D)^3$ can be differentiated only for $q<p$, see \cite{wachsmuth2014differentiability}. Higher order differentiability even requires a larger difference between $p$ and $q$. Therefore, we conduct an explicit higher order sensitivity analysis to precisely determine the stochastic regularity. 

We define $\nu_{\bs{\gamma}}\left(\y,|\r|\right) \defi \DJ_{\y}^{\bs{\gamma}}\nu \left(\y,|\r|\right)$, $\h_{\mi{\gamma}}(\y,\r) \defi \DJ_{\y}^{\bs{\gamma}} \h(\y,\r)$ and $\Ar_{\bs{\gamma}} \defi \DJ_{\y}^{\bs{\gamma}} \Ar \left(\y \right)$, respectively. 
We will encounter derivatives of the function $\h \left(\cdot,\r\right) = \nu \left(\cdot,|\r|\right) \r$ with respect to $\r$, i.e., multi--linear maps $\Jac_\r^k \h \left(\cdot,\r\right): \R^{3k}  \rightarrow \R^3$. Of particular interest is the Jacobian $\Jac_\r^1 \h$, identified with the differential reluctivity tensor as
\begin{equation}
\nud \left(\y,\r\right) \defi \left \{
\begin{aligned}
&\nu \left(\y,|\r|\right) + \frac{\nu\D{1}\left(\y,|\r|\right)}{|\r|}  \r \otimes \r, \ \r \neq 0, \\
&\nu (\y,0), \ \r = 0.
\end{aligned}
\right .
\end{equation}
An important property is stated in the following Lemma. 
\begin{lemma}
\label{lem:nud}
Let Assumption \ref{as:fparam} hold true. Then the differential reluctivity tensor satisfies
\begin{subequations}
\begin{align}
|\nud \left(\y,\mf{s}\right)| &\leq \fmax_\d, \label{eq:reluctivity_diff1} \\
\mf{r}^\top  \nud \left(\y,\mf{s}\right)  \mf{r} &\geq \fmin_\d |\mf{r}|^2, \label{eq:reluctivity_diff2}
\end{align}
\label{eq:reluctivity_diff}
\end{subequations}
for $\fmin_\d,\fmax_\d>0$ and all $\y \in \R^M, \mf{r},\mf{s} \in \R^3$. 
\end{lemma}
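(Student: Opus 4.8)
The plan is to diagonalize the symmetric matrix $\nud(\y,\mf{s})$ explicitly and to bound its three eigenvalues uniformly in $\y$ and $\mf{s}$, whereupon both assertions reduce to standard spectral estimates. First I would fix $\y$ and regard $s\mapsto\nu(\y,s)=f(\y,s)/s$ as a scalar function; by Assumption \ref{as:fparam} the map $f(\y,\cdot)$ is continuously differentiable with $f(\y,0)=0$, so $\nu(\y,\cdot)$ is differentiable on $(0,\infty)$ and $\nu\D{1}$ exists there. The decisive observation is obtained by differentiating the defining identity $\nu(\y,s)\,s=f(\y,s)$, which gives
\begin{equation}
\nu(\y,s)\,s = f(\y,s) \quad\Longrightarrow\quad \nu(\y,s) + s\,\nu\D{1}(\y,s) = f\D{1}(\y,s).
\label{eq:diffid}
\end{equation}

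Next, for $\mf{s}\neq 0$ I would split $\R^3$ into the radial line $\mathrm{span}(\mf{s})$ and its orthogonal complement. Since $(\mf{s}\otimes\mf{s})\mf{v}=(\mf{s}\cdot\mf{v})\mf{s}$ vanishes for $\mf{v}\perp\mf{s}$, the rank--one term in $\nud$ acts only radially; hence $\nud(\y,\mf{s})$ is symmetric with eigenvalue $\nu(\y,|\mf{s}|)$ of multiplicity two on $\mf{s}^\perp$, and, invoking \eqref{eq:diffid} with $s=|\mf{s}|$, the simple eigenvalue $\nu(\y,|\mf{s}|)+|\mf{s}|\,\nu\D{1}(\y,|\mf{s}|)=f\D{1}(\y,|\mf{s}|)$ in the radial direction. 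It then remains to confine all three eigenvalues to $[\fmin,\fmax]$: the radial one is controlled directly by Assumption \ref{as:fdet}, while for $\nu(\y,|\mf{s}|)$ a mean value argument yields $\nu(\y,s)=f(\y,s)/s=f\D{1}(\y,\xi)\in[\fmin,\fmax]$ for some $\xi\in(0,s)$ (alternatively one cites Lemma \ref{lem:nu}). Interpreting $|\cdot|$ as the induced spectral norm, symmetry together with eigenvalues in $[\fmin,\fmax]$ gives at once $|\nud(\y,\mf{s})|\leq\fmax$ and $\mf{r}^\top\nud(\y,\mf{s})\mf{r}\geq\fmin|\mf{r}|^2$, so one may take $\fmax_\d=\fmax$ and $\fmin_\d=\fmin$. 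The case $\mf{s}=0$ is treated separately using the definition $\nud(\y,0)=\nu(\y,0)$, where $\nu(\y,0)=\lim_{s\to 0^+}f(\y,s)/s=f\D{1}(\y,0)\in[\fmin,\fmax]$ by continuity of $f\D{1}$, so both estimates persist.

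The only genuinely non-routine step is the identity \eqref{eq:diffid}, and this is where I expect the main difficulty to lie conceptually. Because $\nu$ need not be monotone, $\nu\D{1}$ may be negative, so a naive estimate of the rank--one contribution $\tfrac{\nu\D{1}(\y,|\mf{s}|)}{|\mf{s}|}\mf{s}\otimes\mf{s}$ would spoil the lower bound and leave the coercivity constant unclear. Recognizing that the radial eigenvalue collapses exactly to $f\D{1}$ is precisely what restores coercivity and pins the constants to $\fmin$ and $\fmax$; once this is in hand, the eigen-decomposition, the mean value bound on $\nu$, and the degenerate case $\mf{s}=0$ are all routine.
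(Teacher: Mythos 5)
Your proof is correct, and it supplies an argument where the paper gives none: the paper's ``proof'' of Lemma~\ref{lem:nud} is a one--line citation to \cite[Lemma 3.1]{langer2006coupled}, so the eigenvalue computation you carry out is exactly the content that reference (and \cite{Pechstein_2004}) establishes. Your decomposition is the standard one: writing $\nud(\y,\mf{s})=\nu(\y,|\mf{s}|)I+\frac{\nu\D{1}(\y,|\mf{s}|)}{|\mf{s}|}\mf{s}\otimes\mf{s}$, the tangential eigenvalue (multiplicity two) is $\nu(\y,|\mf{s}|)$, which lies in $[\fmin,\fmax]$ by the mean value theorem since $f(\y,0)=0$ and $\fmin\leq f\D{1}\leq\fmax$, while the radial eigenvalue collapses via $\nu(\y,s)+s\,\nu\D{1}(\y,s)=f\D{1}(\y,s)$ to $f\D{1}(\y,|\mf{s}|)\in[\fmin,\fmax]$; symmetry then gives the spectral bounds with $\fmin_\d=\fmin$, $\fmax_\d=\fmax$, and your separate treatment of $\mf{s}=0$ via $\nu(\y,0)=f\D{1}(\y,0)$ is also right. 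Two minor remarks. First, your constants are sharp only if $|\cdot|$ in \eqref{eq:reluctivity_diff1} is read as the spectral norm, as you note; for the Frobenius norm one would get $\fmax_\d=\sqrt{3}\,\fmax$, which still proves the lemma as stated since it only asserts existence of positive constants. Second, the claim ``for all $\y\in\R^M$'' in the lemma exceeds what Assumption~\ref{as:fparam} provides ($\rho$--almost all $\y\in\Gamma$); your proof correctly inherits exactly that almost--everywhere qualification, and the discrepancy is the paper's, not yours.
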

\begin{proof}
This result has been established, e.g., in \cite[Lemma 3.1]{langer2006coupled}.
\end{proof}
~\newline
In particular this implies, that the bilinear form $\bb_\d \left(\ur;\cdot,\cdot\right)$, defined by
\begin{equation}
\bb_\d \left(\ur;\vr,\mf{w}\right) \defi \int \limits_D \nud \left(\cdot,\curl \ur\right) \curl \vr \cdot \curl \mf{w} \d \x,
\end{equation}
is continuous and coercive on $\V$. We recall that $\V_h$ is not a subspace of $\V$. However, $\bb_\d$ is continuous and coercive on $\V_h$, too. The form $\bb_\d$ arises naturally when sensitivities are computed and also in the linearized system obtained by the Newton--Raphson method, which is well--posed at each iteration step by the properties just established. 

Provided that $f$ is $k$--times differentiable, the question arises whether higher order derivatives of the solution exist as well. In a first step, we consider the case $k \leq 3$. We impose the following assumption on the material law.  
\begin{assumption}\label{as:fsens}
For the parametric material law there holds
\begin{equation}
f \in \C^{3}(\Gamma \times \R^+) 
\label{as:bh_par_f1}
\end{equation}
with bounded and uniformly continuous derivatives, and additionally \[f\D{2}(\cdot,0) = f\D{3}(\cdot,0) =0.\]
\end{assumption} 

The vanishing higher order derivatives of $f$ around the origin are used here to ensure differentiability in presence of the absolute value. Under the previous assumption we infer that $\Jac_\r^k \h$ is bounded.
\begin{lemma}
\label{lem:multilinear}
Let Assumption \ref{as:fsens} hold true. Then for $|\mi{\alpha}|_1 \leq k \leq 3$, $\partial_\r^{\mi{\alpha}} h_j$ is continuous and $|\partial_\r^{\mi{\alpha}} h_j| \leq C_k$.
\end{lemma}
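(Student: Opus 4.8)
The plan is to exploit the radial structure $h_j(\y,\r)=\nu(\y,|\r|)\,r_j$ and to reduce the boundedness of the multilinear maps $\Jac_\r^k\h$ to that of a handful of scalar profiles in $s\defi|\r|$; I suppress the passive variable $\y$ and only use that, by Assumption \ref{as:fparam}, every bound below is uniform in $\y$. I treat the genuine derivatives $\partial_\r^{\mi{\alpha}}h_j$ with $1\le|\mi{\alpha}|_1\le k\le3$. Writing $\partial_{r_i}s=r_i/s$ for $\r\neq0$, a one--line computation shows that differentiating a radial monomial $\psi(s)\,r_{i_1}\cdots r_{i_p}$ gives
\begin{equation*}
\partial_{r_l}\big(\psi(s)\,r_{i_1}\cdots r_{i_p}\big)=\frac{\psi\D{1}(s)}{s}\,r_l r_{i_1}\cdots r_{i_p}+\psi(s)\sum_{q=1}^{p}\delta_{l i_q}\,r_{i_1}\cdots\widehat{r_{i_q}}\cdots r_{i_p},
\end{equation*}
again a sum of radial monomials, with the profile either unchanged (degree lowered by one) or replaced by $T\psi\defi\psi\D{1}/s$ (degree raised by one). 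Starting from $h_j=\nu(s)\,r_j$ and iterating $k'\defi|\mi{\alpha}|_1$ times, $\partial_\r^{\mi{\alpha}}h_j$ is a finite integer combination of terms $(T^m\nu)(s)\,r_{i_1}\cdots r_{i_d}$ with $d=1+2m-k'$ and $(k'-1)/2\le m\le k'$.

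Since $|r_{i_1}\cdots r_{i_d}|\le s^{d}$, each such term is dominated by $|(T^m\nu)(s)|\,s^{1+2m-k'}$, so it suffices to bound these scalar quantities on $(0,\infty)$ for $1\le k'\le3$. A short induction yields the closed forms
\begin{equation*}
\nu\D{1}=\frac{s f\D{1}-f}{s^{2}},\quad \nu\D{2}=\frac{s^{2}f\D{2}-2s f\D{1}+2f}{s^{3}},\quad \nu\D{3}=\frac{s^{3}f\D{3}-3s^{2}f\D{2}+6s f\D{1}-6f}{s^{4}},
\end{equation*}
and a direct evaluation of the profiles $(T^m\nu)\,s^{1+2m-k'}$ shows that boundedness of $\partial_\r^{\mi{\alpha}}h_j$ reduces to boundedness of the seven scalar quantities $\nu,\ s\nu\D{1},\ \nu\D{1},\ \nu\D{1}/s,\ s\nu\D{2},\ \nu\D{2},\ s\nu\D{3}$.

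The decisive step is the limit $s\to0^{+}$, where the individual $\nu\D{j}$ may diverge but the combinations above stay finite. Inserting the Taylor expansions permitted by $f\in\C^{3}$ with $f(0)=0$ and $f\D{2}(0)=f\D{3}(0)=0$, namely $f=f\D{1}(0)\,s+o(s^{3})$, $f\D{1}=f\D{1}(0)+o(s^{2})$, $f\D{2}=o(s)$ and $f\D{3}=o(1)$, one checks that all three numerators vanish to order $o(s^{3})$; hence $\nu\D{1}=o(s)$, $\nu\D{2}=o(1)$ and $s\nu\D{3}=o(1)$, so every profile is bounded (indeed has a limit) near the origin. This is exactly where Assumption \ref{as:fsens} enters: if $f\D{2}(0)\neq0$ then already $\nu\D{1}/s\to\infty$ and $\Jac_\r^{3}\h$ would be unbounded. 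For $s\to\infty$ the closed forms together with $\fmin\le f\D{1},\,f/s\le\fmax$ and the uniform boundedness of $f\D{2},f\D{3}$ give $s\nu\D{1}=f\D{1}-f/s$, $s\nu\D{2}=f\D{2}-2f\D{1}/s+2f/s^{2}$ and analogously $s\nu\D{3}=f\D{3}+O(1/s)$, all bounded; on each compact $[\varepsilon,R]\subset(0,\infty)$ continuity of $f,f\D{1},f\D{2},f\D{3}$ suffices. Together this proves $|\partial_\r^{\mi{\alpha}}h_j|\le C_k$.

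Finally, continuity on $\R^{3}\setminus\{0\}$ is clear. At the origin, every term carrying a genuinely multidirectional monomial ($d\ge1$) is attached to a scalar profile tending to $0$ (by $\nu\D{1}=o(s)$, $\nu\D{2}=o(1)$, $s\nu\D{3}=o(1)$), which removes the directional ambiguity of $r_{i_1}\cdots r_{i_d}/s^{d}$, whereas the purely radial terms ($d=0$) converge to constants; hence each $\partial_\r^{\mi{\alpha}}h_j$ extends continuously to all of $\R^{3}$. The main obstacle is precisely this behaviour at $\r=0$: tracking which singular profiles actually occur and verifying that the vanishing moments $f\D{2}(0)=f\D{3}(0)=0$ cancel every potential blow-up, so that $\nu\D{j}$ enters only through the weighted combinations $s^{1+2m-k'}(T^m\nu)$ that remain finite.
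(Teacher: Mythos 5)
Your proof is correct and follows essentially the same route as the paper's: both exploit the radial structure of $\h$ to reduce everything to profiles in $s=|\mf{r}|$ (you via an inductive decomposition into terms $(T^m\nu)(s)\,r_{i_1}\cdots r_{i_d}$, the paper by quoting the explicit tensor formulas of its Appendix A), both bound the behavior for $s\rightarrow\infty$ using the boundedness of $f\D{k}$, and both use the vanishing conditions $f\D{2}(\cdot,0)=f\D{3}(\cdot,0)=0$ to cancel the singular factors $1/s,1/s^2,\dots$ at the origin (you via Taylor expansion of $f$, the paper via l'H\^{o}pital applied to $\nu\D{k-1}$). Your write-up is simply more detailed and self-contained, in particular in identifying exactly which weighted combinations $\nu,\ s\nu\D{1},\ \nu\D{1},\ \nu\D{1}/s,\ s\nu\D{2},\ \nu\D{2},\ s\nu\D{3}$ must remain bounded.
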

\begin{proof}
For $\mf{r} \neq 0$, as $h_j(\cdot,\mf{r}) = f(\cdot,|\mf{r}|) r_j / |\mf{r}|$ we see that $h_j$ is $k$-times continuously differentiable. Hence, if $\partial_{\mf{r}}^{\mi{\alpha}} h_j(\cdot,\mf{r})$ is bounded for $\mf{r} \rightarrow \infty$ and $\r \rightarrow 0$ the result follows. For $\r \rightarrow \infty$ we observe that $f\D{k}$ is bounded (by Assumption \ref{as:fsens}) and that the same holds true for $\partial_{\mf{r}}^{\mi{\alpha}} (\r/|\r|)$. 

For $\r \rightarrow 0$ we first observe that by the rule of l'H\^{o}pital
\begin{equation}
\nu\D{k-1}(\cdot,0) = f\D{k}(\cdot,0)/k
\end{equation} 
and hence $\nu\D{1}(\cdot,0) = \nu\D{2}(\cdot,0) = 0$ by Assumption \ref{as:fsens}. 
Hence, using the expressions for $\Jac_\r^{k} \h$ given in Appendix \ref{ap:A} we infer that $\Jac_{\mf{r}}^1 \h (\cdot, 0) (\s_1) = \nu(\cdot,0) \s_1$ and that $\Jac_{\mf{r}}^2 \h (\cdot, 0) = \Jac_{\mf{r}}^3 \h (\cdot, 0) = 0$.
\end{proof}
~\newline 
By examining the proof of Lemma \ref{lem:multilinear} we observe that if $|\mi{\beta}|_1 \leq k$, $\Jac_\r^{k - |\mi{\beta}|_1} \h_{\mi{\beta}}$ is bounded, too. 

In the continuous case, formally differentiating the strong form of the boundary value problem, we obtain for the derivative $\Ar_{\mi{\gamma}}$ 
\begin{subequations}
\begin{align}
\curl \left(\nud \left(\cdot,\curl \Ar \right) \curl \Ar_{\mi{\gamma}} \right) &= \curl \mf{F}_k \left( \left(\Ar_{\mi{\alpha}} \right)_{\mi{\alpha}<\mi{\gamma}} \right), \\
\div \Ar_{\mi{\gamma}} &= 0, \\
\Ar_{\mi{\gamma}} \times \n &= 0,
\end{align}
\label{eq:sens_equation}%
\end{subequations}
where, as shown in the Appendix, $\mf{F}_k$ is given by 
\begin{equation}
\mf{F}_k = -\sum_{{0 \leq \mi{\alpha} \leq \mi{\gamma} }} \!\! \binom{\mi{\gamma}}{\mi{\alpha}} \!\!
\sum_{{ \pi \in \Pi^*}} \Jac_\r^{\mathrm{ca}\left(\pi \right)}\h_{\mi{\alpha}} \left(\cdot,\curl \Ar\right)  \left(\curl  \Ar_{\mi{\pi}_1},\dots, \curl \Ar_{\mi{\pi}_{\mathrm{ca}(\pi)}} \right).
\label{eq:sens_rhs}
\end{equation}
With $\Pi^*$ we denote the set of partitions of $\gamma -\alpha$, such that $\mathrm{ca}\left(\pi \right) > 1$ if $\alpha=0$, where $\mathrm{ca}\left(\pi \right)$ refers to the cardinality of $\pi$. We refer to Appendix \ref{ap:B} for a more detailed definition of the underlying sets of $\mf{F}_k$. We observe, that $\mi{\pi}_i < \mi{\gamma}$ for $i=1,\dots,\mathrm{ca}\left(\pi \right)$ and hence the derivatives contained in the right--hand--side are of lower order. Equation (\ref{eq:sens_equation}) is the basis for establishing the regularity of the solution with respect to the stochastic variable. This in turn determines the convergence rate of the corresponding discretization error. Before bounding the collocation error, we discuss the finite element error. To this end, we assume the following:
\begin{assumption}\label{as:space_reg}
The solution of (\ref{eq:apriori_strong_det}) has the additional regularity $\Ar \in L^{\infty} \left(\Gamma,\H^s \left(\curl;D\right)\right)$, with $s \in (1/2,1]$, and the same holds true for the weak solution of (\ref{eq:sens_equation}), if it exists.
\end{assumption} 
~\newline
Based on this regularity assumption, we can establish the following result:
\begin{lemma}
\label{lem:fem}
Let Assumptions \ref{as:fparam} and \ref{as:space_reg} hold true, then the deterministic error is bounded as 
\begin{equation}
\errorh \leq C h^{s},
\end{equation}
where $C$ depends on $\mf{A}$ and on $s$ but is independent of $\y$.
\begin{proof}
For the deterministic error $\errorh$, also in the present nonlinear case, C\'ea's Lemma 
\begin{equation}
\|\Ar \left(\y\right) - \Ar_{h} \left(\y\right) \|_{\V} \leq C_1 \inf_{\vr_h \in \V_h} \|\Ar \left(\y\right) - \vr_h \|_{\V},
\end{equation}
holds for $\rho$--almost all $\y \in \Gamma$, see \cite{bachinger2005numerical}. Then from \cite[Theorem 5.41]{monk2003finite} we obtain
\begin{equation}
\|\Ar \left(\y\right) - \Pi_h \Ar \left(\y\right) \|_{\V} \leq C_2 h^{s} \|\Ar \left(\y\right)\|_{\H^s \left(\curl;D\right)},
\end{equation}
where $\Pi_h$ is the canonical interpolation operator \cite[p. 134]{monk2003finite} and the constant $C_2$ depends on $s$, but is independent of $\y$. As $\Ar  \in L^{\infty} \left(\Gamma,\H^s \left(\curl;D\right)\right)$, we conclude that $\errorh \leq C_3 h^s$.
\end{proof}
\end{lemma}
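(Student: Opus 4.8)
The plan is to reduce the bound on the $\Hspace$--norm error to a pointwise--in--$\y$ deterministic finite element estimate and then integrate over the parameter domain. Since the $\Hspace$--inner product is $(\ur,\ur)_{\Hspace}=\E[(\ur,\ur)_{\V}]$, I would begin by writing
\begin{equation}
\errorh^2 = \int_\Gamma \|\Ar(\y)-\Ar_h(\y)\|_{\V}^2\,\rho(\y)\,\d y,
\end{equation}
so that it suffices to control $\|\Ar(\y)-\Ar_h(\y)\|_{\V}$ by $C h^{s}$ with a constant uniform in $\y$; as $\rho$ is a probability density it integrates to one over $\Gamma$, and the stated bound follows immediately.

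For the pointwise estimate I would proceed in the classical three steps. First, for $\rho$--almost every fixed $\y$, equations \eqref{eq:apriori_strong_det} and \eqref{eq:apriori_fem} are the continuous and discrete deterministic curl--curl problems, whose operator is strongly monotone and Lipschitz continuous with constants $\fmin,\fmax$ independent of $\y$ by Assumption \ref{as:fparam} (cf.\ \eqref{eq:apriori_monoLip}). A nonlinear C\'ea--type quasi--optimality estimate then gives
\begin{equation}
\|\Ar(\y)-\Ar_h(\y)\|_{\V}\leq C_1 \inf_{\vr_h\in\V_h}\|\Ar(\y)-\vr_h\|_{\V},
\end{equation}
with $C_1$ depending only on $\fmin,\fmax$; I would cite \cite{bachinger2005numerical} for this. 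Second, choosing $\vr_h=\Pi_h\Ar(\y)$, the canonical N\'ed\'elec interpolant, and invoking \cite[Theorem 5.41]{monk2003finite} bounds the best--approximation error by $C_2 h^{s}\|\Ar(\y)\|_{\H^s(\curl;D)}$, with $C_2$ depending on $s$ and the mesh regularity but not on $\y$. Third, Assumption \ref{as:space_reg} supplies $\sup_{\y\in\Gamma}\|\Ar(\y)\|_{\H^s(\curl;D)}<\infty$, which renders the pointwise bound uniform and closes the argument.

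The main obstacle is the non--conformity $\V_h\not\subset\V$, already flagged before the statement: discrete divergence--free functions are not divergence--free, so a direct monotonicity plus Galerkin--orthogonality argument leaves a consistency (variational crime) term and a genuine C\'ea lemma is not immediate. This is precisely why I would lean on the result of \cite{bachinger2005numerical} rather than reprove quasi--optimality by hand; the underlying mechanism is that the discrete Poincar\'e--Friedrichs inequality \cite[Theorem 4.7]{hiptmair2002finite} restores the norm equivalence $\|\cdot\|_{\V}\simeq\|\curl\,\cdot\|_2$ on $\V_h$, letting one absorb the non--conforming term while keeping the quasi--optimality constant independent of $\y$. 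The remaining care is bookkeeping the $\y$--uniformity of every constant: $C_1$ via the $\y$--independence of $\fmin,\fmax$, $C_2$ via mesh regularity and $s$ alone, and the regularity factor via the $L^\infty(\Gamma,\cdot)$ bound of Assumption \ref{as:space_reg}. Once these hold, integrating the uniform pointwise estimate against $\rho$ yields $\errorh\leq C h^{s}$.
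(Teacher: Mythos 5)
Your proposal is correct and takes essentially the same route as the paper: a nonlinear C\'ea-type quasi-optimality estimate cited from \cite{bachinger2005numerical}, followed by the canonical N\'ed\'elec interpolation bound of \cite[Theorem 5.41]{monk2003finite}, with uniformity in $\y$ supplied by Assumption \ref{as:space_reg}. Your additional remarks on the non-conformity $\V_h \not\subset \V$ and the explicit integration against $\rho$ only spell out what the paper's proof leaves implicit.
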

Based on the sensitivity analysis carried out in this section, we can now use Theorems 4 and 5 of \cite{motamed2013stochastic} to establish the main result.
\begin{theorem}
\label{thm:conv}
Let Assumptions \ref{as:fparam}, \ref{as:fsens} and \ref{as:space_reg} hold true. There holds 
\begin{equation}
\|\Ar - \Ar_{h,q}\|_{\Hspace} \leq C h^{s} + \errorp,
\end{equation}
where $C$ is the constant from Lemma \ref{lem:fem}. Moreover, let $i_s$ be an integer $1 \leq i_s \leq k$, such that $i_s = 1$ for $s \in (1/2,3/4)$, $i_s=2$ for $s \in [3/4,1)$ and $i_s=3$ for $s=1$, respectively. For the isotropic tensor grid collocation method we have 
\begin{equation}
\errorp \leq \left \{ 
\begin{aligned}
&C_{\mathrm{T}} q^{-i_s}, \\
&\frac{C_{\mathrm{T}}}{2} N_q^{-i_s/M},
\end{aligned}
\right .
\end{equation}
with respect to the level (polynomial degree) $q$ and the number of collocation points $N_q$, respectively. For the sparse grid collocation method and $M \leq i_s$, there holds
\begin{equation}
\errorp \leq \left \{ 
\begin{aligned}
&C_{\mathrm{S}} (q+1)^{2M} 2^{-\floor*{i_s/M}(q+1)}, \\
&C_{\mathrm{S}} \left(1+\log_2 \left(\frac{N_q}{M} \right) \right)^{2M} N_q^{-\floor*{i_s/M}\frac{\log 2}{\xi + \log M}},
\end{aligned}
\right .
\end{equation}
with respect to the level $q$ and the number of collocation points $N_q$ in the sparse grid, respectively. The constants $C_{\mathrm{T}},C_{\mathrm{S}}$ depend on $s,\mf{A},\nu,\rho$ and $\xi \approx 2.1$.
\end{theorem}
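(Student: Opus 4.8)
The plan is to assemble the final convergence estimate by combining the deterministic finite element error with the stochastic collocation error, invoking the abstract interpolation results of \cite{motamed2013stochastic}. The triangle-inequality decomposition $\|\Ar - \Ar_{h,q}\|_{\Hspace} \leq \errorh + \errorp$ is immediate; Lemma \ref{lem:fem} already furnishes $\errorh \leq C h^{s}$ with the constant $C$ independent of $\y$, so the only genuine work lies in bounding $\errorp = \| \Ar_{h} - \Ar_{h,q} \|_{\Hspace}$. The strategy for this term is to verify the hypotheses of Theorems 4 and 5 of \cite{motamed2013stochastic}, whose conclusions are precisely the tensor-grid and sparse-grid rates stated here. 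Those theorems require control of the mixed stochastic derivatives $\DJ_\y^{\mi{\gamma}} \Ar_h$ up to order $i_s$, uniformly in $\y$, and it is here that the sensitivity analysis of this section must be brought to bear.

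First I would establish, by induction on $|\mi{\gamma}|_1$, that the derivatives $\Ar_{\mi{\gamma}}$ exist up to order $k$ and are bounded in $L^{\infty}(\Gamma,\V)$. The base case is the a priori estimate \eqref{eq:apriori}. For the inductive step I would invoke the sensitivity equation \eqref{eq:sens_equation}: since $\bb_\d(\Ar;\cdot,\cdot)$ is coercive on $\V$ (and on $\V_h$) with constant $\fmin_\d$ by Lemma \ref{lem:nud}, the weak form of \eqref{eq:sens_equation} is well-posed by Lax--Milgram, giving
\begin{equation}
\|\Ar_{\mi{\gamma}}\|_{\V} \leq \frac{1}{\fmin_\d} \|\mf{F}_k\|_2.
\end{equation}
The right-hand side $\mf{F}_k$ from \eqref{eq:sens_rhs} is a finite sum of terms built from $\Jac_\r^{\mathrm{ca}(\pi)} \h_{\mi{\alpha}}$ contracted against curls of strictly lower-order derivatives $\Ar_{\mi{\pi}_i}$ with $\mi{\pi}_i < \mi{\gamma}$. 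By the remark following Lemma \ref{lem:multilinear}, each factor $\Jac_\r^{k-|\mi{\beta}|_1} \h_{\mi{\beta}}$ is bounded by a constant $C_k$, so $\|\mf{F}_k\|_2$ is controlled by products of the lower-order sensitivity norms, which are bounded by the induction hypothesis. This closes the induction and yields a uniform-in-$\y$ bound on $\|\Ar_{\mi{\gamma}}\|_{L^{\infty}(\Gamma,\V)}$ for $|\mi{\gamma}|_1 \leq k$, and the same argument applied on $\V_h$ gives the corresponding bound for $\Ar_{h,\mi{\gamma}}$.

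Next I would connect the order $i_s$ of attainable differentiability to the spatial regularity exponent $s$. This is where the norm gap flagged in the text becomes decisive: because $\h : L^p(D)^3 \to L^q(D)^3$ is differentiable only for $q < p$ (see \cite{wachsmuth2014differentiability}), each additional stochastic derivative consumes a fixed margin of spatial integrability, and the available margin is governed by $s$ through the embedding $\H^s(\curl;D) \hookrightarrow L^p(D)^3$. This accounting is exactly what produces the thresholds $i_s = 1$ for $s \in (1/2,3/4)$, $i_s = 2$ for $s \in [3/4,1)$, and $i_s = 3$ for $s = 1$; Assumption \ref{as:fsens} supplies the $k \leq 3$ needed for $\mf{F}_k$ and its factors to make sense, and Assumption \ref{as:space_reg} guarantees that the sensitivity equations \eqref{eq:sens_equation} have the requisite weak solutions in $\H^s(\curl;D)$. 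With the uniform bound on $i_s$-th order sensitivities in hand, I would feed this regularity directly into Theorems 4 and 5 of \cite{motamed2013stochastic}, reading off the tensor-grid rates $q^{-i_s}$ and $N_q^{-i_s/M}$ and the sparse-grid rates as stated, with the constants $C_{\mathrm{T}}, C_{\mathrm{S}}$ inheriting dependence on $s, \mf{A}, \nu, \rho$ and $\xi$.

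I expect the main obstacle to be the careful bookkeeping of the norm gap in the third step, namely pinning down precisely how many stochastic derivatives survive for each range of $s$. Verifying existence and boundedness of the sensitivities via \eqref{eq:sens_rhs} is combinatorially involved but routine once Lemma \ref{lem:multilinear} is available; the delicate point is that differentiability is \emph{finite} here, in contrast to the analytic dependence in the linear case, so one cannot simply iterate indefinitely. The determination of $i_s$ as a function of $s$, tracking the loss of integrability against the gain from $\H^s(\curl;D)$ embeddings, is the crux that distinguishes this nonlinear problem and is the step I would treat most carefully.
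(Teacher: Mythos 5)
Your strategy coincides with the paper's: the same triangle--inequality split, Lemma \ref{lem:fem} for $\errorh$, and verification of the hypotheses of Theorems 4 and 5 of \cite{motamed2013stochastic} by bounding stochastic derivatives through the sensitivity system \eqref{eq:sens_equation}, the coercivity of $\bb_\d$, and Lemma \ref{lem:multilinear}. However, your second paragraph has a genuine gap, and it sits precisely at the step that makes this theorem nontrivial. You close the induction by asserting that, since each $\Jac_\r^{\mathrm{ca}(\pi)}\h_{\mi{\alpha}}$ is bounded, $\|\mf{F}_k\|_2$ ``is controlled by products of the lower-order sensitivity norms''. That inference is false: $\mf{F}_k$ contains products $|\curl \Ar_{\mi{\pi}_1}|\cdots|\curl \Ar_{\mi{\pi}_{\mathrm{ca}(\pi)}}|$ with $\mathrm{ca}(\pi)\geq 2$ whenever $|\mi{\gamma}|_1\geq 2$, and the $L^2(D)$--norm of such a product is not controlled by the product of the factors' $\V$--norms (i.e.\ $L^2$--norms of the curls). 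The paper instead applies the generalized H\"older inequality, which forces each factor into $L^{2\,\mathrm{ca}(\pi)}(D)^3$, and then checks when that integrability is actually available: by Assumption \ref{as:space_reg} each $\partial_y^j\curl\Ar(y)$ lies in $\H^s(D)^3$, and the Sobolev embedding yields $L^4(D)^3$ only for $s\geq 3/4$ and $L^6(D)^3$ only for $s=1$. This is the sole source of the thresholds $i_s=1,2,3$. Indeed, if your induction were sound as written, it would give uniform bounds on all derivatives up to order $k=3$ for every $s>1/2$, i.e.\ $i_s=3$ always --- contradicting the $s$--dependent rates in the very statement you are proving; your third paragraph then tacitly contradicts your second by conceding that integrability limits the attainable order.

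Your third paragraph does name the correct embedding, but it defers the accounting rather than performing it, and it misattributes the mechanism: the norm-gap result of \cite{wachsmuth2014differentiability} appears in the paper only as motivation for why implicit-function-theorem/analyticity arguments are unavailable, not as an ingredient of the proof. The repair is concrete: for $|\mi{\gamma}|_1=i_s$ bound $\|\mf{F}_{i_s}\|_2 \leq C\max_\pi \prod_i \|\partial_\y^{\mi{\pi}_i}\curl\Ar\|_{L^{p}(D)^3}$ with $p=2\,\mathrm{ca}(\pi)\leq 2 i_s$, then verify $\partial_y^j\curl\Ar(y)\in L^{2 i_s}(D)^3$ for $j=0,\dots,i_s-1$ case by case in $s$, feed the resulting bounds into the Jackson-type estimate \eqref{eq:apriori_Jackson}, and induct over $M$ for the tensor grid. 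Finally, for the sparse grid you should make explicit the point you skip: the argument only controls mixed derivatives of order $\floor*{i_s/M}$, which is why the theorem requires $M\leq i_s$ and delivers the rate $2^{-\floor*{i_s/M}(q+1)}$ rather than one involving $i_s$ itself.
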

\begin{proof}
The deterministic error estimate has been established in Lemma \ref{lem:fem}. In a first step, we bound the stochastic collocation error for the isotropic tensor grid. In this case, the collocation error can be recast as an interpolation error
\begin{equation}
\Ar_h - \Ar_{h,q} = \Ar_h - \mathrm{I}_{q} \Ar_h.
\end{equation} 
We will consider the case $M=1$, solely, as the result for $M > 1$ follows by induction, see, e.g., \cite[Lemma 7.1]{canuto2007fictitious} or \cite[Theorem 4]{motamed2013stochastic}. The collocation error is related to the best--approximation error in $\Q{q} \left(\Gamma\right) \otimes \V_h$, following \cite{babuvska2010stochastic}, as
\begin{equation}
\|\Ar_h - \mathrm{I}_{q} \Ar_h \|_{L^2_{\rho} \left(\Gamma \right) \otimes \V} \leq C_1 \inf_{\vr \in \Q{q} \left(\Gamma \right) \otimes \V_h} \|\Ar_h - \vr \|_{L^{\infty} \left(\Gamma,\V \right)}.
\end{equation}
The error decay then depends on the smoothness of $\Ar$ (and hence $\Ar_h$) with respect to the stochastic variable. We note that, as $M=1$, $\partial_{\y}^{\mi{\gamma}} \Ar$ simplifies to $\partial_y^{i_s} \Ar$. Using (\ref{eq:sens_equation}) and the coercivity of $\bb_\d$ we can formally bound
\begin{equation}
\|\partial_y^{i_s} \Ar\|_{\V} \leq C_2 \|\mf{F}_{i_s}\|_2.
\end{equation}
Applying the generalized H\"older inequality and Lemma \ref{lem:multilinear} yields
\begin{align*}
\|\mf{F}_{i_s}\|_2 &\leq C_3 \max_\pi \| \ |\partial_{\y}^{\mi{\pi}_1} \curl  \Ar| \cdots |\partial_{\y}^{\mi{\pi}_{\mathrm{ca}(\pi)}} \curl \Ar| \ \|_2 \\
&\leq C_3 \max_\pi \| \partial_{\y}^{\mi{\pi}_1} \curl  \Ar \|_{L^p(D)^3} \cdots \| \partial_{\y}^{\mi{\pi}_{\mathrm{ca}(\pi)}} \curl \Ar \|_{L^p(D)^3},
\end{align*}
where $p/2 = \mathrm{ca}(\pi) \leq i_s$ and $C_3$ depends on $\nu$ and $s$. Hence, we have to choose $i_s$ such that $ \partial_{y}^{j}\curl \Ar(y) \in L^{2i_s}(D)^3$, for $j=0,\dots,i_s-1$ and $\rho$-almost all $y \in \Gamma$:
\begin{romannum}
\item
For $s \in (1/2,3/4)$ we obtain $i_s=1$ as we only have $\curl \Ar(y) \in L^2(D)^3$. 
\item For $s \in [3/4,1)$ we can set $i_s=2$ as $\partial_y^j \curl \Ar(y) \in L^4(D)^3$ holds, for $j=0,1$.  
\item[] This follows from the Sobolev embedding theorem, see, e.g. \cite[Theorem 3.7]{monk2003finite}, 
\item[] as we have $\partial_y^j \curl \Ar(y) \in \H^s(D)^3$.
\item 
For $s=1$ we obtain $i_s=3$, as we can show that $\partial_y^j \curl \Ar(y) \in L^6(D)^3$, for $j=0,1,2$ 
\item[] using again the Sobolev embedding theorem.
\end{romannum}
Hence, the results of Jackson quoted from \cite{motamed2013stochastic} yield
\begin{equation}
\|\Ar_h - \mathrm{I}_{q} \Ar_h \|_{L^2_{\rho}\left(\Gamma \right) \otimes \V} \leq C_4 q^{-i_s} \max_{j=0,\dots,i_s}\|\partial_y^j \Ar_h \|_{L^{\infty}\left(\Gamma,\V \right)}
\label{eq:apriori_Jackson}
\end{equation}
with $C_4$ depending on $s$ and by induction for $M>1$
\begin{equation}
\errorp \leq C_5 q^{-i_s} \sum_{m=1}^M \max_{j=0,\dots,i_s}\|\partial_{y_m}^j \Ar_h \|_{L^{\infty}\left(\Gamma,\V \right)} \leq C_6 q^{-i_s}. 
\end{equation}

The results for the sparse grid collocation error can be inferred from \cite{motamed2013stochastic} or \cite{barthelmann2000high} once bounds on mixed derivatives of order $k$, i.e., $\partial_{\y}^{\mi{\gamma}} \Ar$ with $\gamma_j \leq k$ for $j=1,\dots,M$ have been established. Using the same arguments as above we obtain
\begin{equation}
\|\partial_{\y}^{\mi{\gamma}} \Ar\|_{\V} \leq C_7 \|\mf{F}_{|\mi{\gamma}|_1}\|_2
\end{equation}
for $|\mi{\gamma}|_1 \leq i_s$. This in turn ensures bounded mixed derivatives of order $k=\floor*{i_s/M}$ for the case $M \leq i_s$, solely. Then the results follow from Theorem 5 of \cite{motamed2013stochastic}.
\end{proof}
\begin{remark}\label{rmk:twod}
Concerning the stochastic discretization error, in the two--dimensional case we can choose $i_s = 2$ for $s \in [1/2,2/3)$ and $i_s = 3$ for $s \in [2/3,1]$, respectively. 
\end{remark}
\begin{remark}\label{rmk:lin}
The linearization error could be included into this convergence estimate: provided that the initial values $\Ar_{h,q,0}\left(\y_k \right)$ are sufficiently close to $\Ar_{h,q} \left(\y_k \right)$, there exists $r \in (0,1)$, such that
\begin{equation}
\errorl \leq C r^l
\end{equation}
for the linearization error of the solution $\Ar_{h,q,l}$ of (\ref{eq:apriori_fem_col_lin}) obtained by the Ka\u{c}anov method. An improved estimate could be obtained for the Newton--Raphson method.
\end{remark}
\begin{remark}
The sparse grid convergence rate of $\mathcal{O}(N_q^{-\gamma \floor*{i_s/M}})$, with $\gamma \in (0,1)$, is smaller than the tensor grid convergence rate of $\mathcal{O}(N_q^{-i_s/M})$. This reduction, due to a lack of mixed regularity of the solution, is also observed in the numerical experiments in Section \ref{sec:num_examples}. However, for smooth solutions the sparse grid approach is expected to be more efficient for large $M$.
\end{remark}

We now address the question whether a fast convergence, e.g., a rate of $q^{-k}$ for the tensor grid and arbitrary $k \in \mathbb{N}$, can be obtained under suitable regularity assumptions on the material input data. This is true if $\curl \Ar$ is bounded uniformly, e.g., for smooth domains and data, as can be seen by the preceding arguments. Also, if we accept a non--uniform constant with respect to the mesh size $h$, the decay of the stochastic discretization error can be improved, as stated in the following.
\begin{proposition}
\label{prp:main}
Let Assumption \ref{as:fparam} hold true and let $\partial_\r^{\mi{\alpha}} \partial_\y^{\mi{\beta}} \h$ be continuous and bounded, for $|\mi{\alpha}|_1 + |\mi{\beta}|_1 \leq k \in \mathbb{N}$. Then we have
\begin{equation}
\errorp \leq C_{\mathrm{T},h,k} \ q^{-k}, 
\label{eq:conv}
\end{equation}
for a tensor grid and
\begin{equation}
\errorp \leq C_{\mathrm{S},h,k} (q+1)^{2M} 2^{-\floor*{k/M}(q+1)}, 
\label{eq:conv_sparse}
\end{equation}
if $M \leq k$, for a sparse grid, respectively. The constants additionally depends on $\mf{A},\nu,\rho$.
\end{proposition}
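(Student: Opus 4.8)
The plan is to run the argument of Theorem~\ref{thm:conv} once more, but entirely on the semidiscrete level, working with the finite element solution $\Ar_h(\y)$ and exploiting that $\V_h$ is finite dimensional. Since $\errorp = \|\Ar_h - \Ar_{h,q}\|_{\Hspace}$ measures only the stochastic collocation of $\Ar_h$, it suffices to bound the stochastic derivatives of $\Ar_h$ up to total order $k$ (and, for the sparse grid, the mixed derivatives up to order $\floor*{k/M}$ in each coordinate) uniformly in $\y$, and then to feed these bounds into the Jackson-- and Smolyak--type interpolation estimates of Theorems~4 and~5 of~\cite{motamed2013stochastic}, exactly as in Theorem~\ref{thm:conv}. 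The improvement over the cap $i_s \leq 3$ found there will come solely from the way these derivative bounds are produced.

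First I would show that $\y \mapsto \Ar_h(\y)$ is $k$--times continuously differentiable. In contrast to the continuous setting, the implicit function theorem is now available: identifying $\Ar_h$ with its coefficient vector, (\ref{eq:apriori_fem}) becomes a finite nonlinear system $G(\y,\Ar_h)=0$ whose partial Jacobian in the coefficients is the stiffness matrix of $\bb_\d(\Ar_h;\cdot,\cdot)$, invertible by the coercivity from Lemma~\ref{lem:nud}. The hypothesis renders $G$ a $\C^k$ map in $(\y,\Ar_h)$, so the finite--dimensional implicit function theorem yields $\Ar_h \in \C^k(\Gamma;\V_h)$. Differentiating (\ref{eq:apriori_fem}) then shows that, with $\Ar_{h,\mi{\gamma}} \defi \DJ_\y^{\mi{\gamma}} \Ar_h$, the derivatives solve the discrete analogue of (\ref{eq:sens_equation}),
\begin{equation*}
\bb_\d(\Ar_h;\Ar_{h,\mi{\gamma}},\vr_h) = \int \limits_D \mf{F}_{|\mi{\gamma}|_1} \cdot \curl \vr_h \ \d x, \quad \forAll{\vr_h}{\V_h},
\end{equation*}
with $\mf{F}_{|\mi{\gamma}|_1}$ as in (\ref{eq:sens_rhs}) but with $\Ar$ replaced by $\Ar_h$, whence coercivity gives $\|\Ar_{h,\mi{\gamma}}\|_{\V} \leq \fmin_\d^{-1} \|\mf{F}_{|\mi{\gamma}|_1}\|_2$.

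The crux, and the point of departure from Theorem~\ref{thm:conv}, is the estimate of $\|\mf{F}_{|\mi{\gamma}|_1}\|_2$. Each summand of (\ref{eq:sens_rhs}) is a product $\Jac_\r^{\mathrm{ca}(\pi)}\h_{\mi{\alpha}}(\cdot,\curl \Ar_h)(\curl \Ar_{h,\mi{\pi}_1},\dots,\curl \Ar_{h,\mi{\pi}_{\mathrm{ca}(\pi)}})$, and since $\Jac_\r^{\mathrm{ca}(\pi)}\h_{\mi{\alpha}} = \partial_\r^{\mathrm{ca}(\pi)} \partial_\y^{\mi{\alpha}} \h$ with $\mathrm{ca}(\pi)+|\mi{\alpha}|_1 \leq |\mi{\gamma}|_1 \leq k$, the hypothesis guarantees that these multilinear maps are bounded. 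The generalized H\"older inequality then gives
\begin{equation*}
\|\mf{F}_{|\mi{\gamma}|_1}\|_2 \leq C \max_\pi \prod_{i=1}^{\mathrm{ca}(\pi)} \|\curl \Ar_{h,\mi{\pi}_i}\|_{L^{2\mathrm{ca}(\pi)}(D)^3}.
\end{equation*}
Whereas Theorem~\ref{thm:conv} had to control these $L^p$ norms through the Sobolev embedding, which forces $p \leq 6$ and hence $\mathrm{ca}(\pi) \leq 3$, I would instead invoke the inverse inequality on the finite element space~\cite{brenner2008mathematical}, namely $\|\curl \vr_h\|_{L^p(D)^3} \leq C_p\, h^{-3(1/2-1/p)} \|\curl \vr_h\|_2$ for every $p \in [2,\infty]$, valid because $\curl \vr_h$ is piecewise polynomial on the quasi--uniform mesh $\mesh_h$. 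This controls every factor by $\|\Ar_{h,\mi{\pi}_i}\|_{\V}$ at the cost of a negative power of $h$, thereby removing the cap on the differentiation order and accounting for the loss of $h$--uniformity in the constants $C_{\mathrm{T},h,k},C_{\mathrm{S},h,k}$.

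Finally I would close the recursion by induction on $|\mi{\gamma}|_1$: the base case $\|\Ar_h\|_{\V} \leq C_{\mathrm{F}}\|\Jr\|_2/\fmin$ is (\ref{eq:apriori}), and since $\mi{\pi}_i < \mi{\gamma}$ for all $i$ the factors above are of strictly lower order, so every $\|\Ar_{h,\mi{\gamma}}\|_{\V}$ with $|\mi{\gamma}|_1 \leq k$ is bounded uniformly in $\y$ by an $h$--dependent constant. Inserting the order--$k$ bound into the one--dimensional Jackson estimate and tensorizing yields (\ref{eq:conv}), while the uniform bound on mixed derivatives up to order $\floor*{k/M}$ in each variable (available precisely when $M \leq k$) feeds into Theorem~5 of~\cite{motamed2013stochastic} to give (\ref{eq:conv_sparse}). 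I expect the main obstacle to be conceptual rather than computational: one has to recognize that passing to the finite--dimensional $\V_h$ simultaneously restores the implicit function theorem and, through the inverse inequality, breaks the integrability bottleneck that capped the uniform estimate of Theorem~\ref{thm:conv}, at the unavoidable price of constants that deteriorate as $h \to 0$.
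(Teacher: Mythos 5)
Your proposal is correct and follows essentially the same route as the paper's proof: both work on the semidiscrete level with $\Ar_h$, bound $\mf{F}_k$ via the generalized H\"older inequality together with an inverse inequality on the quasi--uniform mesh (accepting $h$--dependent constants, which is exactly what breaks the integrability cap of Theorem \ref{thm:conv}), close the recursion over $|\mi{\gamma}|_1$, and feed the resulting uniform derivative bounds into the Jackson/Smolyak estimates of \cite{motamed2013stochastic}. Your two additions --- invoking the finite--dimensional implicit function theorem to justify $\Ar_h \in \C^k \left(\Gamma;\V_h \right)$ rigorously, and the explicit $L^p$ inverse estimate with factor $h^{-3\left(1/2-1/p\right)}$ in place of the paper's $L^{\infty}$ bound (\ref{eq:apriori_discrete_norm_equivalence}) --- are refinements of detail rather than a different argument.
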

\begin{proof}
As in the proof of Theorem \ref{thm:conv} we bound $\mf{F}_k$ as
\begin{align}
\left\|\mf{F}_k \right\|_2 &\leq C_k \max_\pi \left\| \ |\curl \DJ_\y^{\mi{\pi}_1} \Ar_h| \cdots |\curl \DJ_\y^{\mi{\pi}_{\mathrm{ca}(\pi)}} \Ar_h| \ \right\|_2 \\
&\leq \tilde{C}_k h^{-k/2} \max_\pi \| \DJ_\y^{\mi{\pi}_1} \Ar_h \|_{\V} \cdots \|\DJ_\y^{\mi{\pi}_{\mathrm{ca}(\pi)}} \Ar_h \|_{\V},
\end{align}
where we have used
\begin{equation}
\|\curl \DJ_\y^{\mi{\pi}_i} \Ar_h\|_{L^{\infty} \left(D \right)^3} \leq h^{-1/2} \|\curl \Ar_h \|_{2},
\label{eq:apriori_discrete_norm_equivalence}
\end{equation}
by the shape--uniformity of the mesh (\ref{eq:apriori_quasiuniform}). This yields 
\begin{equation}
\|\partial_\y^{\mi{\gamma}} \Ar_h \|_{\V} \leq C_{k,h} \| \Ar_h \|_{\V},
\label{eq:apriori_higher_order_estimate}
\end{equation}
with $|\mi{\gamma}|_1=k$, for the solution $\Ar_h \left(\y \right)$ of (\ref{eq:apriori_fem}). The result can now be established along the lines of the proof of Theorem \ref{thm:conv} by observing that (\ref{eq:apriori_Jackson}) holds true for $k$ arbitrary.
\end{proof}
~\newline
A similar idea was applied in \cite{Pechstein_2004} in a two--dimensional setting to establish the Lipschitz continuity of the Newton operator.

\section{Numerical Examples}
\label{sec:num_examples}
Several numerical examples are presented here to illustrate the findings. In $2$D we consider a stochastic version of the $p$--Laplacian, where the solution is known exactly, and the L--shaped domain, respectively. In $3$D the magnetostatic TEAM benchmark problem $13$ \cite{team} will be discussed with the stochastic material law of Section \ref{subsec:KL}. Here, numerical results are obtained using FEniCS \cite{logg2012automated}, and all non--uniform meshes are generated by Gmsh \cite{geuzaine2009gmsh}. Tensor and sparse grids are generated using the Sparse grids Matlab kit \cite{back.nobile.eal:comparison,sg}.

In $2$D, the curl operator reduces to $\curl \left(u \ \mf{e}_3 \right) = \left(\partial_{x_2} u,-\partial_{x_1} u, 0 \right)$, where $\mf{e}_3$ denotes the unit vector in the third dimension. The $2$D equivalent of (\ref{eq:apriori_weak}) then reads, find $u \in \H_0^1 \left(D \right)$ subject to 
\begin{equation}
\left(\nu \left(|\grad u| \right) \grad u,\grad v \right)_2 = \left(J,v \right)_2 \quad \forall{v} \in \H_0^1 \left(D \right).
\label{eq:ne_Dirichlet}
\end{equation} 
Equation (\ref{eq:ne_Dirichlet}) is approximated by means of $W_h$, i.e., lowest order nodal finite elements.

\subsection{p--Laplace}
\begin{figure}[!t]
\centering
\includegraphics[width=0.95\textwidth]{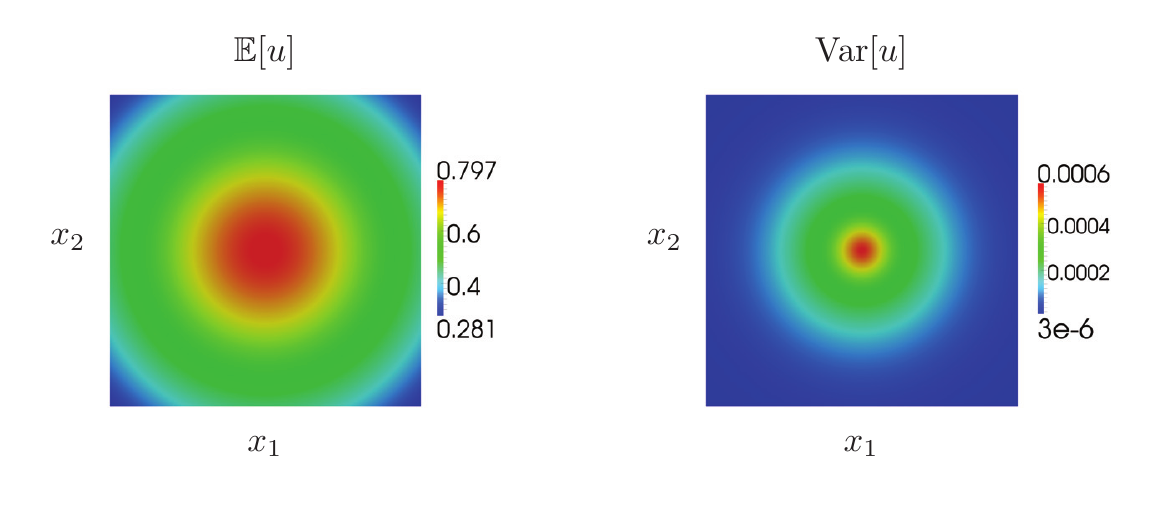}
\caption{$p$--Laplace example. Left: $\E_u(x_1,x_2)$ of analytic solution. Right: $\var_u(x_1,x_2)$ of analytic solution.}
\label{fig:pLaplace1}
\end{figure}
\begin{figure}[!t]
\centering
\includegraphics{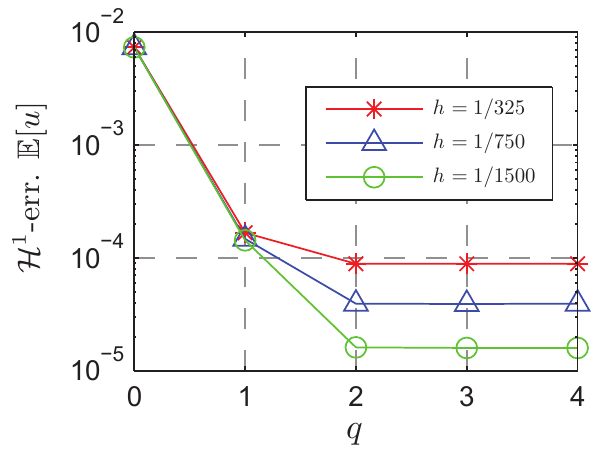}
\caption{$p$--Laplace example. Discretization error, i.e., $\H^1$--error of the expected value for different levels of spatial discretization and small linearization error. A fast error convergence is observed until the spatial discretization error level is attained. As a reference, $\E[u]$ is approximated with $\E_q[u]$, $q=10$.}
\label{fig:pLaplace2}
\end{figure}
We consider $D = \left(0,1 \right)\times \left(0,1 \right)$, a constant current $J=2$ and for $s \in \R^+$ 
\begin{equation}
\nu \left(s \right) = s^{p-2},
\label{eq:NE_reluctivity_pLaplace}
\end{equation} 
giving rise to the $p$--Laplace problem considered in \cite{el2011guaranteed}. The solution is given by 
\begin{equation}
u \left(p,\left(x_1,x_2 \right)\right) = -\frac{p-1}{p}|\left(x_1,x_2\right) - \left(0.5,0.5\right)|^{\frac{p}{p-1}} + \frac{p-1}{p} 0.5^{\frac{p}{p-1}},
\label{eq:NE_plaplcae_sol}
\end{equation}
as we use the inhomogeneous Dirichlet boundary condition $u |_{\partial D}$. Modeling $Y=p$ as a single random parameter, with $Y>1$ a.s., we obtain a stochastic problem. For each $Y$ we have $u(Y,\cdot) \in \H^1(D)$ and even $\grad u(Y,\cdot) \in L^{\infty}(D)^2$. Hence, we expect a fast spectral stochastic convergence. Note that (\ref{eq:NE_reluctivity_pLaplace}) violates the assumptions on the reluctivity for $s \rightarrow \{0,\infty\}$. However, monotonicity and continuity results can be obtained as outlined in \cite{chaillou2007posteriori}. We model $Y$ as uniformly distributed on $\left(3,5 \right)$, i.e., $Y \sim \mathbb{U}\left(3,5 \right)$. Using a uniform triangulation, (\ref{eq:ne_Dirichlet}) is iteratively solved by means of the Ka\u{c}anov method (with damping) until the solution increment is below $10^{-12}$ in the discrete $L^{\infty}$--norm, which yields a negligible linearization error compared to the other sources of error. In Figure \ref{fig:pLaplace1} we depict the expected value and variance of the solution, respectively. In Figure \ref{fig:pLaplace2} the error $\| \E[u - u_{q,h}] \|_{\H_0^1 \left(D \right)}$ is depicted for different values of $h$. As we are not aware of a closed form solution of $\E[u]$, we approximate it by $\E_q[u]$, with $q=10$. From Figure \ref{fig:pLaplace2} an exponential decay of the stochastic error can be observed until the corresponding discretization error level is attained. 
 
\subsection{L--Shaped Domain}
\begin{figure}[!t]
\centering
\includegraphics{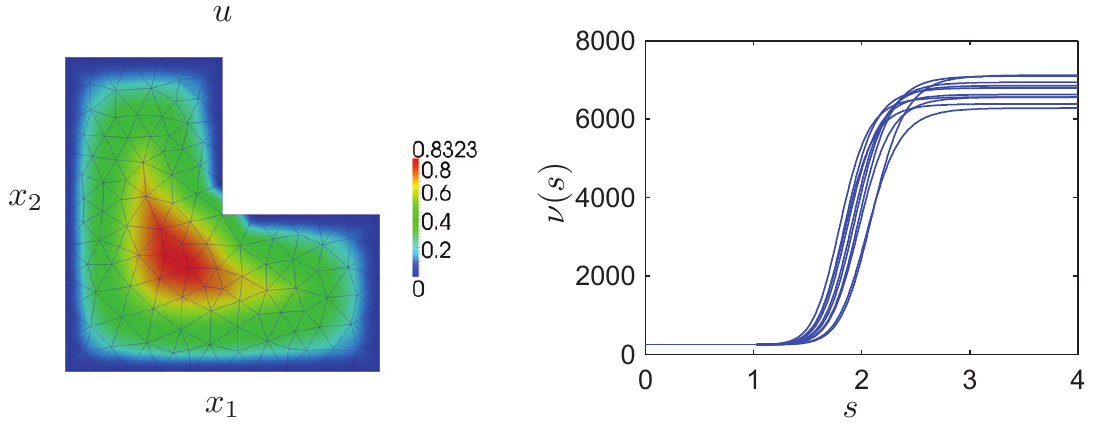}
\caption{L--shaped domain. Left: solution evaluated with unperturbed material law and constant source current density $J=10^5$ on the coarsest grid FE grid. Right: random realizations of stochastic material model (\ref{eq:Cimrak}).}
\label{fig:lshape1}
\end{figure}
Another two--dimensional example is the L--shaped domain, given by $[-1,1]^2  \backslash [0,1]^2$. Homogeneous Dirichlet boundary conditions are applied together with the constant excitation $J=10^5$. We adopt the material model 
\begin{equation}
\nu(s) = d + \frac{c s^{2 b}}{a^b + s^{2 b}}
\label{eq:Cimrak}
\end{equation}
from \cite{cimrak2012material}. Given initial values $a_0=1.78$, $b_0=14$, $c_0=6000$ and $d_0=245$, we set $a = a_0(1 + 0.2 Y_1)$, $b = b_0$, $c = c(1+ 0.2 Y_2)$, and $d=d_0$ to introduce randomness, where $Y_{1,2} \sim \mathbb{U}\left(-\sqrt{3},\sqrt{3} \right)$. In Figure \ref{fig:lshape1} on the left and right we depict the solution for $Y_1=Y_2=0$ and random realizations of the reluctivity, respectively. Linearization is carried out as in the previous example. A coarse finite element (FE) grid as depicted in Figure \ref{fig:lshape1} on the left, referred to as FE grid $1$, is uniformly refined two (FE grid $2$) and four times (FE grid $3$), respectively. The stochastic error for a tensor grid $H_{q,2}^{\mathrm{T}}$, w.r.t. both the polynomial degree and the number of grid points, is shown in Figure \ref{fig:lshape2}. As a reference, a higher order polynomial approximation ($q=9$) in the stochastic variable is used on each grid. For this example there holds $\grad u \in \H^s(D)^2$, with $s = 2/3 - \epsilon$, with $\epsilon>0$. According to Theorem \ref{thm:conv} in $2$D, see Remark \ref{rmk:twod}, at least a decay of $q^{-2}$ is expected as $s \in [1/2,2/3)$. For the finest FE grid we numerically observe a decay of even $q^{-2.987}$. Hence, the convergence rate seems to be insensitive to the small $\epsilon$ parameter and the result could possibly be slightly improved. For grids $1$ and $2$ the decay is also faster than predicted. In Figure \ref{fig:lshape2_sparse} the errors for a sparse $H_{q,2}^{\mathrm{S}}$ are depicted. As $\floor*{i_s/M}=1$ in this case, the convergence is assured. We observe an algebraic decay w.r.t. to the number of grid points. Also the tensor grid approach is more efficient in this case as, e.g., for $N_q = 49$ and FE grid 3, the error using $H_{q,2}^{\mathrm{S}}$ is $4.12 \times 10^{-4}$, whereas the error using $H_{q,2}^{\mathrm{T}}$ is $1.77 \times 10^{-5}$. 
\begin{figure}[!t] 
\centering
\includegraphics{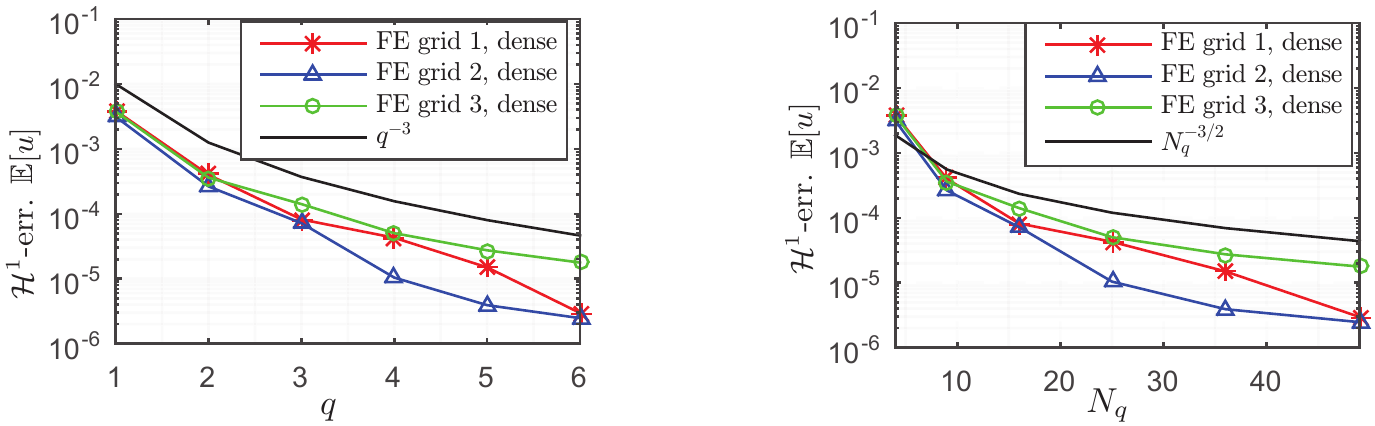}
\caption{L--shaped domain. Estimated stochastic error for tensor grid $H_{q,2}^{\mathrm{T}}$ on three different FE grids. The reference solution is computed with polynomial degree $q=9$. Left: error w.r.t. underlying polynomial degree. Right: error w.r.t. number of grid points.}
\label{fig:lshape2}
\end{figure}
\begin{figure}[!t] 
\centering
\includegraphics{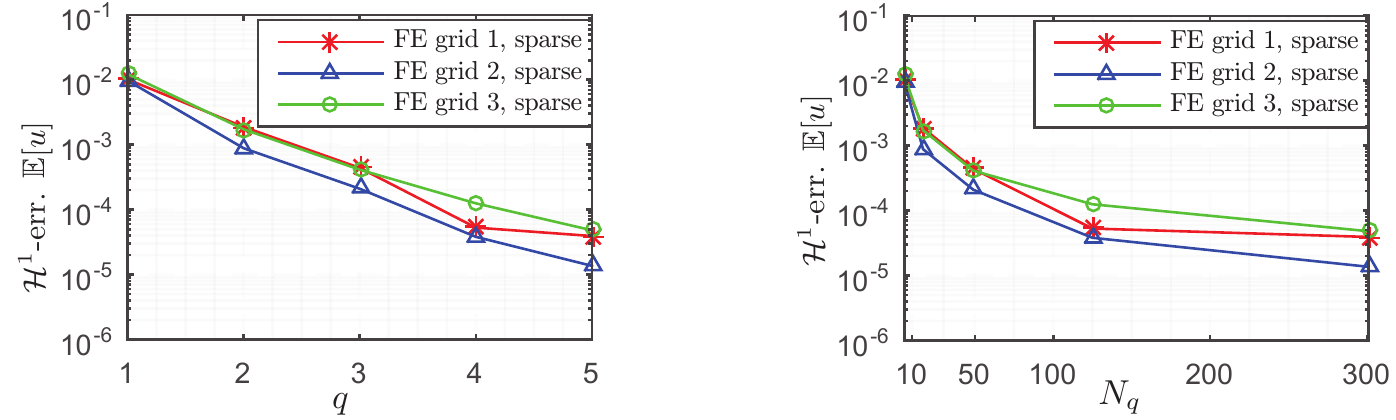}
\caption{L--shaped domain. Estimated stochastic error for sparse Smolyak grid $H_{q,2}^{\mathrm{S}}$ on three different FE grids. The reference solution is computed on a dense grid with polynomial degree $q=9$. Left: error w.r.t. sparse grid level. Right: error w.r.t. number of grid points.}
\label{fig:lshape2_sparse}
\end{figure}

\subsection{TEAM Benchmark}
TEAM benchmarks are setup to validate electromagnetic codes and in particular magnetic field simulations. Here, we investigate the nonlinear magnetostatic TEAM $13$ problem. A magnetic field in three thin iron sheets is generated by a rectangular coil, with blended corners, as depicted in Figure \ref{fig:team2} on the right. Due to symmetry, only the upper half of the iron sheets is visualized. In a pre--processing step, an electrokinetic problem is solved to obtain the source current distribution $\Jr$ with a total imposed current of $3000 \mathrm{A}$ per cross section. Gauging is enforced through a Lagrange multiplier and a mixed formulation see, e.g., \cite{monk1994superconvergence}. The computational domain is truncated, applying homogeneous Dirichlet boundary conditions at a boundary, sufficiently far away from the problem setup. Strictly speaking this iron--air interface problem would require minor modifications of the analysis presented in this paper, as mentioned in Remark \ref{rmk:spatial_f}. We replaced the material properties of the original benchmark and employed the stochastic $B$--$H$ curve presented in Section \ref{subsec:KL}, with $L=1/2$ and $\delta=2$ instead. Extrapolation beyond the data range is carried out as described in \cite{Reitzinger_2002}. Note that for this case we are concerned with $\C^1$ trajectories, solely. A tetrahedral mesh, see also Figure \ref{fig:team2}, is generated using the software Gmsh \cite{geuzaine2009gmsh} and two steps of uniform refinement are carried out. We refer to the different FE grids with $7388$, $41958$ and $261196$ total degrees of freedom as grid $1$, $2$ and $3$, respectively. In Figure \ref{fig:team2} on the left the magnetic flux density distribution is depicted for the expected value of the $B$--$H$ curve.     
\begin{figure}[!t]
\centering
\includegraphics[width=0.95\textwidth]{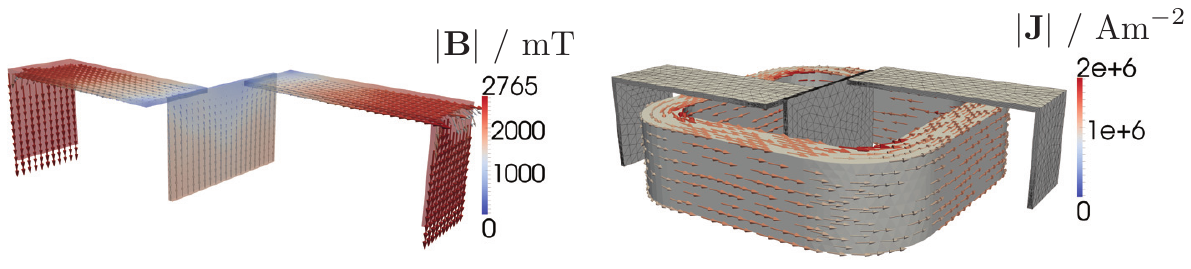}
\caption{TEAM $13$ problem. Left: magnetic flux density distribution in nonlinear material region for expected value of stochastic $B$--$H$ curve. Right: source current distribution within coil and coarsest mesh of nonlinear material region.}
\label{fig:team2}
\end{figure}
The nonlinear problem is linearized and iterated as explained in the $p$--Laplace example with a linearization increment below $10^{-5}$ in the discrete $L^{\infty}$--norm. In view of the expected low regularity of the solution and the small number of random inputs a tensor grid $H_{q,3}^{\mathrm{T}}$ is used. The stochastic convergence is depicted in Figure \ref{fig:team3} for all three FE grids. Here, the stochastic error is estimated as $\H(\curl)$--norm of $\E[\Ar_{h,q,l}]-\E[\Ar_{h,q+1,l}]$, as we do not have an analytical solution. The initially rapid convergence deteriorates until the linearization error level is attained. We also observe that the convergence is algebraic. However, the convergence is faster than the predicted rate $q^{-1}$ in view of the limited differentiability of the reluctivity.
\begin{figure}[!t]
\centering
\includegraphics{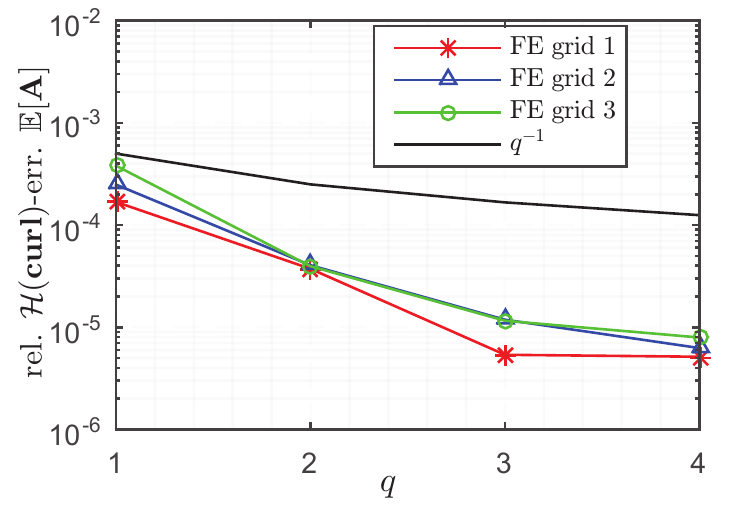}
\caption{TEAM $13$ problem. Estimated stochastic discretization error using $H_{q,3}^{\mathrm{T}}$, i.e., $\H(\curl)$--error of the expected value for different FE grids. Convergence is limited to the linearization error level.}
\label{fig:team3}
\end{figure}

\section{Conclusions}
In this work, we have addressed the stochastic nonlinear elliptic curl--curl equation with uncertainties in the material law. Assumptions on the input have been formulated in order to obtain a well--posed stochastic formulation and it was shown that they can be fulfilled when a suitable discretization of the random input is carried out by the truncated Karhunen--Lo\`{e}ve expansion. As monotonicity is required for the trajectories of the material law, oscillations can only occur with a rather small magnitude. A stability result for approximations of the random input was also derived. In the second part of the paper, a stochastic collocation method for the nonlinear curl--curl equation was analyzed. Under moderate differentiability assumptions on the material law, a convergence rate of $q^{-k}$ was obtained for the stochastic collocation error using tensor grids, where $1 \leq k \leq 3$. For smooth boundaries and data this estimate holds true for all $k \in \mathbb{N}$. These estimates were shown to be in good agreement with numerical results for academic and benchmark examples. Convergence results for sparse grids were also obtained. However, in this case convergence can be expected only for a limited number of random input parameters. 

\begin{appendix}

\section{Tensors}
\label{ap:A}
The tensors $\Jac_{\mf{r}}^k \h$ for $1 \leq k \leq 3$, $\s_1,\s_2,\s_3 \in \R^3$ and $\mf{r} \neq 0$ read as
\begin{equation}
\Jac_{\mf{r}}^1 \h (\cdot, \mf{r}) (\s_1) =   \frac{\nu\D{1}\left(\cdot,|\r|\right)}{|\r|}  \r (\r \cdot \s_1) + \nu \left(\cdot,|\r|\right) \s_1, 
\end{equation}
\begin{multline}
\Jac_{\mf{r}}^2 \h (\cdot, \mf{r}) (\s_1,\s_2) = \left(\frac{\nu\D{2}\left(\cdot,|\r|\right)}{|\r|^2} - \frac{\nu\D{1}\left(\cdot,|\r|\right)}{|\r|^3}\right)  \r (\r \cdot \s_1) (\r \cdot \s_2) \\
	+ \frac{\nu\D{1}\left(\cdot,|\r|\right)}{|\r|} \left ( (\r \cdot \s_1) \s_2 + (\r \cdot \s_2) \s_1 + \r (\s_1 \cdot \s_2) \right ),
\end{multline}
and
\begin{multline}
\Jac_{\mf{r}}^3 \h (\cdot, \mf{r}) (\s_1,\s_2,\s_3) = 
\left(\frac{\nu\D{3}\left(\cdot,|\r|\right)}{|\r|^3} - 3\frac{\nu\D{2}\left(\cdot,|\r|\right)}{|\r|^4} + 3\frac{\nu\D{1}\left(\cdot,|\r|\right)}{|\r|^5}\right)  \r (\r \cdot \s_1) (\r \cdot \s_2) (\r \cdot \s_3) \\
+\left(\frac{\nu\D{2}\left(\cdot,|\r|\right)}{|\r|^2} - \frac{\nu\D{1}\left(\cdot,|\r|\right)}{|\r|^3}\right)  \left(\s_3 (\r \cdot \s_1) (\r \cdot \s_2) + \r (\s_3 \cdot \s_1) (\r \cdot \s_2)  + \r (\r \cdot \s_1) (\s_3 \cdot \s_2) \right) \\
+\left(\frac{\nu\D{2}\left(\cdot,|\r|\right)}{|\r|^2}  - \frac{\nu\D{1}\left(\cdot,|\r|\right)}{|\r|^3} \right)\left ( (\r \cdot \s_1) \s_2 + (\r \cdot \s_2) \s_1 + \r (\s_1 \cdot \s_2) \right ) (\r \cdot \s_3) \\
+\frac{\nu\D{1}\left(\cdot,|\r|\right)}{|\r|} \left ( (\s_3 \cdot \s_1) \s_2 + (\s_3 \cdot \s_2) \s_1 + \s_3 (\s_1 \cdot \s_2) \right ),
\end{multline}
respectively.

\section{Sensitivity Analysis}
\label{ap:B}
Applying $\DJ_{\y}^{\mi{\gamma}}$, where $|\mi{\gamma}|_1=k$ to 
\begin{equation}
\curl \left( \h \left(\y,\curl \Ar \left(\y \right) \right) \right) = \Jr,
\end{equation}
we obtain 
\begin{equation}
\curl \sum_{0 \leq \mi{\alpha}  \leq \mi{\gamma}} \binom{\mi{\gamma}}{\mi{\alpha}}\left(\DJ_\y^{\mi{\gamma}- \mi{\alpha}} \h_{\mi{\alpha}} \left(\cdot,\curl \Ar \left(\y\right) \right) \right) = 0.
\label{eq:ap2}
\end{equation}
Using Fa\`a di Bruno's formula we expand 
\begin{multline}
\DJ_\y^{\mi{\beta}} \h_{\mi{\alpha}} \left(\cdot,\curl \Ar \left(\y\right) \right) = \\
\sum_{\pi \in \Pi \left(\beta \right)} \Jac_\r^{\mathrm{ca}\left(\pi \right)}\h_{\mi{\alpha}} \left(\cdot,\curl \Ar \left(\y\right) \right)  \left(\curl \DJ_\y^{\pi_1} \Ar\left(\y\right),\dots, \curl \DJ_\y^{\pi_{\mathrm{ca}\left(\pi \right)}} \Ar\left(\y\right) \right),
\end{multline}
where $\Pi \left(\mi{\beta} \right)$ represents the set of partitions of $\beta$, $\mathrm{ca}\left(\pi \right)$ the cardinality of $\pi=\left \{\pi_1,\dots,\pi_{\mathrm{ca}\left(\pi \right)} \right \}$ and $\pi_i \subseteq \beta$ for $i=1,\dots,\mathrm{ca}\left(\pi \right)$, see \cite[Theorem 2]{clark2012faa}. Then, equation (\ref{eq:ap2}) can be rewritten as
\begin{multline}
\curl \left(\nud(\cdot,\curl \Ar) \curl \Ar_{\mi{\gamma}} \right) = \\ - \curl \sum_{0 \leq \mi{\alpha} \leq \mi{\gamma}} \binom{\mi{\gamma}}{\mi{\alpha}} 
  \left(\sum_{\pi \in \Pi^* \left(\gamma -\alpha \right)} \Jac_\r^{\mathrm{ca}\left(\pi \right)}\h_{\mi{\alpha}} \left(\cdot,\curl \Ar  \right)  \left(\curl  \Ar_{\mi{\pi}_1},\dots, \curl \Ar_{\mi{\pi}_{\mathrm{ca}(\pi)}} \right) \right),
\end{multline}
where $\Pi^* \left(\gamma -\alpha \right)$ is the defined as the set $\Pi \left(\gamma -\alpha \right)$, with $\mathrm{ca}\left(\pi \right) > 1$ if $\alpha=0$. We observe, that $\mi{\pi}_i < \mi{\gamma}$ for $i=1,\dots,\mathrm{ca}\left(\pi \right)$ and the summation carried out in the previous equation and hence the derivatives contained in the right--hand--side are of lower order.
\end{appendix}

\section*{Acknowledgment}
The authors would like to thank St\'ephane Cl\'enet for providing measurement data of $B$--$H$ curves and Herbert De Gersem for valuable comments and discussions on the subject.

\end{document}